\newtheorem{tm}{Theorem}
\newtheorem{convention}{Convention}
\newtheorem{rem}{Remark}
\newtheorem{rems}{Remarks}
\newtheorem{lm}{Lemma}
\newtheorem{ex}{Example}
\newtheorem{nota}{Notation}
\begin{document}

\title{On realizability of sign patterns by real polynomials}
\author{Vladimir Petrov Kostov}
\address{Universit\'e C\^ote d’Azur, CNRS, LJAD, France} 
\email{vladimir.kostov@unice.fr}
\begin{abstract}
The classical Descartes' rule of signs limits the 
number of positive roots of a real polynomial in one variable by the 
number of sign changes in the sequence of its coefficients.  
One can ask 
the question which pairs of nonnegative integers $(p,n)$, chosen 
in accordance with this rule and with some other natural conditions,     
can be the pairs of numbers of positive and negative roots 
of a real polynomial with 
prescribed signs of the coefficients. The paper solves  
this problem for degree $8$ polynomials.\\ 

{\bf Key words:} real polynomial in one variable; sign pattern; Descartes' 
rule of signs\\ 

{\bf AMS classification:} 26C10; 30C15
\end{abstract}
\maketitle 

\section{Formulation of the problem and of the results}

The classical Descartes' rule of signs states that a real polynomial in one 
variable has not more real positive roots than the number of sign changes 
in the sequence of its coefficients. 
Any sequence of $\pm$-signs 
$\bar{\sigma}:=(\sigma _0,\sigma _1,\ldots ,\sigma _d)$ is called a 
{\em sign pattern}. In the present paper 
we consider sign patterns defined by the signs of the coefficients of degree 
$d$ polynomials $P$, so in particular $\sigma _d=$sign$(P(0))$. 
For a given sign pattern its 
{\em Descartes' pair} 
$(p_{\bar{\sigma}},n_{\bar{\sigma}})$ is the number of 
sign changes and sign preservations in the sequence of coefficients. Denote 
by $(pos_P,neg_P)$ the numbers of positive and negative roots of $P$ 
counted with multiplicity. Hence the following restrictions must hold true:

\begin{equation}\label{pn}
pos_P\leq p_{\bar{\sigma}}~,~neg_P\leq n_{\bar{\sigma}}~,~
pos_P\equiv p_{\bar{\sigma}}({\rm mod}~2)~,~neg_P\equiv ,n_{\bar{\sigma}}({\rm mod}~2)~.
\end{equation}
(The inequality $neg_P\leq n_{\bar{\sigma}}$ follows from Descartes' rule 
applied to the polynomial $P(-x)$.) Pairs $(pos,neg)$ satisfying conditions 
(\ref{pn}) are called {\em admissible} for the sign pattern $\bar{\sigma}$ 
(and the latter is {\em admitting} them). 

The present paper finishes the study which was begun in 
\cite{FoKoSh} of sign patterns and their 
admissible pairs for polynomials of degree up to $8$. The present 
introduction reproduces with some small 
modifications the one of \cite{FoKoSh} and the results 
obtained in that paper, 
see Theorems~\ref{tmknown}, \ref{tmd7} and~\ref{tmd8}. The new results 
are given in Theorem~\ref{tmnew} and then 
presented in another way (suitable to be 
compared to the previously obtained ones) at the end of this section.    

Clearly conditions (\ref{pn}) are only necessary ones, i.e. for a 
given sign pattern $\bar{\sigma}$ and an admissible pair 
$(p,n)$ it is not a priori clear whether there exists a degree $d$ 
polynomial with this sign pattern and with exactly $p$ distinct positive and 
exactly $n$ distinct negative roots. If such a polynomial exists, then we 
say that the given combination of sign pattern and admissible pair 
{\em is realizable}. 

\begin{nota}
{\rm For a given sign pattern $\bar{\sigma}$ we define its corresponding 
{\em reverted} sign pattern $\bar{\sigma}^r$ as $\bar{\sigma}$ 
read from the back and by 
$\bar{\sigma}_m$ the sign pattern obtained from the given one by changing the 
signs in second, fourth, etc. position while keeping the other signs the same. 
If $\bar{\sigma}$ is 
defined by a degree $d$ polynomial 
$P(x)$, then $\bar{\sigma}^r$ is the sign pattern of $x^dP(1/x)$ and 
$\bar{\sigma}_m$ is the one of $(-1)^dP(-x)$.}
\end{nota} 

\begin{ex}
{\rm For $d=4$ the sign pattern $(+,-,-,-,+)$ is equal to 
$(+,-,-,-,+)^r$ and one has $(+,-,-,-,+)_m=(+,+,-,+,+)=(+,-,-,-,+)_m^r$. 
For $d=8$ the sign pattern $(+,+,-,+,-,-,-,-,+)$ is equal 
to $(+,+,-,+,-,-,-,-,+)^r_m$.} 
\end{ex}

\begin{rems}
{\rm (1) In what follows we assume that the leading coefficients of 
the polynomials are positive, so sign patterns (except in some places of 
the proofs) begin with $+$. 

(2) It is clear that $(\bar{\sigma}^r)^r=\bar{\sigma}$, 
$(\bar{\sigma}_m)_m=\bar{\sigma}$ and $(\bar{\sigma}^r)_m=(\bar{\sigma}_m)^r$ 
(so we write simply $\bar{\sigma}^r_m$).

(3) The sign patterns and admissible pairs $(\bar{\sigma},(p,n))$, 
$(\bar{\sigma}^r,(p,n))$, $(\bar{\sigma}_m,(n,p))$ and 
$(\bar{\sigma}^r_m,(n,p))$ are realizable or not simultaneously. Therefore 
it makes sense to consider the question of realizability of given 
sign patterns with given admissible pairs modulo the standard 
$\mathbb{Z}_2\times \mathbb{Z}_2$-action defined by 
$\bar{\sigma} \mapsto \bar{\sigma}^r$ and 
$\bar{\sigma}\mapsto \bar{\sigma}_m$.}
\end{rems}

It seems that for the first time the question of realizability of sign patterns 
with admissible pairs has been asked in \cite{AJS}. In \cite{Gr} Grabiner 
has obtained the first example of nonrealizability. Namely, he has shown that 
for $d=4$ the sign pattern $(+,-,-,-,+)$ is not realizable with the 
admissible pair $(0,2)$ (Descartes' pair of the pattern equals $(2,2)$). 
In \cite{AlFu} Albouy and Fu have given the exhaustive answer to this question 
of realizability for degrees not greater than $6$. In Theorems~\ref{tmknown}, 
\ref{tmd7} and \ref{tmd8}  
we change at some places (w.r.t. the original formulations in \cite{AlFu} 
or \cite{FoKoSh}) a sign pattern $\sigma$ to $\sigma _m$ and the 
corresponding pair $(p,n)$ to $(n,p)$ in order to have mostly pairs of the form 
$(0,n)$ in the formulations:

\begin{tm}\label{tmknown}
(1) For degree $1$, $2$ and $3$, any sign pattern is realizable with any of its 
admissible pairs.

(2) For degree $4$ the only case of nonrealizability (up to the standard 
$\mathbb{Z}_2\times \mathbb{Z}_2$-action) is the one of Grabiner's example.

(3) For degree $5$ the only such case is given by the sign pattern 
$(+,-,-,-,-,+)$ with the pair $(0,3)$.

(4) For degree $6$ the only such cases are: 
$(+,-,-,-,-,-,+)$ with $(0,2)$ or $(0,4)$; $(+,-,+,-,-,-,+)$ with $(0,2)$; 
%$(+,+,+,+,-,+,+)$ with $(2,0)$; 
$(+,+,-,-,-,-,+)$ with $(0,4)$.
\end{tm}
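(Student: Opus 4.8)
The plan is to treat Theorem~\ref{tmknown} as a finite, degree-by-degree case analysis, since it aggregates the classification of \cite{AlFu} (degrees $\le 6$) together with Grabiner's original example \cite{Gr}. First I would use the $\mathbb{Z}_2\times\mathbb{Z}_2$-action to cut down the workload: for each $d\le 6$ I would list the orbit representatives of sign patterns (normalized to begin with $+$), and for each representative record its Descartes pair $(p_{\bar{\sigma}},n_{\bar{\sigma}})$ together with all admissible pairs $(pos,neg)$ permitted by (\ref{pn}). For $d\le 3$ the claim to establish is that \emph{every} admissible pair is realizable, whereas for $d=4,5,6$ I would isolate the handful of candidate exceptions and must decide realizability for each.

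For the realizability (positive) direction I would argue by explicit construction. The basic tool is a concatenation step: if a degree $d$ polynomial realizes a sign pattern with pair $(p,n)$, then multiplying it by $(x-a)$ with $a>0$ sufficiently large, respectively by $(x+a)$, adds one positive, respectively one negative, root in a controlled way, and the new sign pattern is read off from the old coefficients. Starting from the trivial linear and quadratic cases and inserting roots one factor at a time, one reaches every admissible pair for $d\le 3$ and all but the exceptional combinations for $d=4,5,6$. A second tool is perturbation: beginning with a polynomial whose roots are all real with the prescribed signs (so that Descartes is sharp in both directions and the sign pattern is forced), I would coalesce a chosen pair of real roots into a complex-conjugate pair, lowering $pos$ or $neg$ by $2$ while keeping the sign pattern fixed---here one must check that no coefficient vanishes along the deformation. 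This covers the admissible pairs with $pos+neg<d$.

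The substantive content, and the main obstacle, is the non-realizability (negative) direction for the listed exceptions. The prototype is Grabiner's case: for $P=x^4+ax^3+bx^2+cx+d$ with $a,b,c<0<d$, suppose $pos_P=0$ and $neg_P=2$. Writing $P$ as the product of the real quadratic factors dictated by this root data and comparing coefficients yields a system of inequalities on the elementary symmetric functions of the roots that has no solution; equivalently one exhibits an auxiliary quantity (for instance a combination of $P$ and its derivative, or a discriminant-type expression) whose sign is forced both ways, a contradiction. Each remaining exception in parts (3) and (4)---the degree $5$ pattern $(+,-,-,-,-,+)$ with $(0,3)$, and the four degree $6$ patterns---would be disposed of by an analogous coefficient-comparison argument tailored to the prescribed factorization. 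I expect no single uniform obstruction to cover all cases: the difficulty is precisely that each exceptional pattern requires its own ad hoc inequality to rule out the forbidden pair, while one must simultaneously verify that \emph{every other} admissible pair for these low degrees does admit a realization.
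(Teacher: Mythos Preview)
The paper does not prove Theorem~\ref{tmknown}; it is stated as background and attributed to Albouy--Fu \cite{AlFu} (for all degrees $\le 6$) and to Grabiner \cite{Gr} (for the first non-realizable example). The paper's own work begins at degree~$8$. Hence there is no proof in this paper against which your proposal can be compared.

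That said, your outline is a reasonable description of how such classifications are carried out, but it remains a strategy rather than a proof. Two points warrant care. First, the concatenation step is not as flexible as you suggest: multiplying by $(x\pm a)$ with $a$ large produces one specific new sign pattern from a given one, so reaching an arbitrary prescribed pattern this way requires a more delicate induction or additional constructions. Second, in the perturbation step the clause ``one must check that no coefficient vanishes along the deformation'' is precisely where the whole difficulty lies: the exceptional combinations listed in the theorem are exactly those for which \emph{every} such deformation forces some coefficient through zero. So this is not a routine verification but the crux of the non-realizability arguments, and your sketch does not yet supply the ad hoc inequalities you acknowledge are needed.
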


The cases $d=7$ and $d=8$ have been considered in \cite{FoKoSh}. 
The exhaustive answer to the question of realizability for $d=7$ is 
as follows:

\begin{tm}\label{tmd7}
For $d=7$ there are $1472$ cases (modulo the standard 
$\mathbb{Z}_2\times \mathbb{Z}_2$-action) of sign pattern and admissible 
pair. Of these exactly $6$ are not realizable:
$(+,+,-,-,-,-,-,+)$, $(+,+,-,-,-,-,+,+)$ and $(+,+,+,-,-,-,-,+)$ with 
$(0,5)$; 
$(+,-,-,-,-,+,-,+)$ with $(0,3)$;
%$(+,+,-,+,-,-,-,-)$ with $(3,0)$; 
$(+,-,-,-,-,-,-,+)$ with 
$(0,3)$ and $(0,5)$.
\end{tm}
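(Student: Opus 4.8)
The plan is to treat the statement as a finite classification: enumerate all cases, prove realizability of all but six by explicit construction, and prove nonrealizability of the six exceptional pairs by tailored obstructions.

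\emph{Enumeration and reductions.} First I would list all $2^7=128$ sign patterns $\bar{\sigma}=(+,\sigma _1,\ldots ,\sigma _7)$, compute for each the Descartes pair $(p_{\bar{\sigma}},n_{\bar{\sigma}})$ with $p_{\bar{\sigma}}+n_{\bar{\sigma}}=7$, and record every admissible pair coming from~(\ref{pn}). Quotienting by the $\mathbb{Z}_2\times \mathbb{Z}_2$-action $\bar{\sigma}\mapsto \bar{\sigma}^r$, $\bar{\sigma}\mapsto \bar{\sigma}_m$ collapses the list to the $1472$ representative cases. Since realizability is invariant under this action, I may always pass to a convenient representative; in particular, because $\bar{\sigma}_m$ interchanges positive and negative roots, every case with $neg=0$ is equivalent to one with $pos=0$, so all six candidate obstructions can be normalized to the form $(0,n)$ as stated.

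\emph{Realizability.} The backbone is the classical fact, which I would first establish, that every sign pattern is realizable with its own Descartes pair $(p_{\bar{\sigma}},n_{\bar{\sigma}})$ by choosing $p_{\bar{\sigma}}$ positive and $n_{\bar{\sigma}}$ negative simple roots adjusted to the pattern. I would then reach the submaximal admissible pairs by two devices. The first is \emph{concatenation}: given realizations of a degree-$d_1$ and a degree-$d_2$ pattern whose signs match at the splice, a suitably rescaled product realizes the degree-$(d_1+d_2)$ pattern with the summed pair, which lets me bootstrap from Theorem~\ref{tmknown} and the degree-$\leq 6$ data. The second is controlled multiplication by a factor $(x+b)$ or $(x-a)$ with $b$ or $a$ taken very small or very large, together with perturbation of a polynomial having a multiple root so as to split it into a prescribed number of simple real roots while freezing the signs of the coefficients. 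Running these constructions over the $1466$ nonexceptional cases, organized by $\bar{\sigma}$ and by the value of $pos$, dispatches all of them; this is lengthy but routine bookkeeping.

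\emph{Nonrealizability.} For the six exceptional cases I would argue by contradiction using the factorization forced by $pos=0$. If a degree-$7$ polynomial $P$ with positive leading coefficient has $pos_P=0$ and $neg_P=k$, then $P(x)=R(x)\prod_{i=1}^{k}(x+b_i)$ with all $b_i>0$ and $R$ a product of $(7-k)/2$ positive-definite quadratics $x^2+u_jx+v_j$ (so $u_j^2<4v_j$ and $v_j>0$). The factor $\prod_i(x+b_i)$ has strictly positive coefficients, hence every sign change in $\bar{\sigma}$ must be produced by a quadratic with $u_j<0$. Writing the coefficients of $P$ as the convolution of the all-positive symmetric functions of the $b_i$ with the quadratic data, I would extract the sign constraints demanded by the target pattern and show they contradict $u_j^2<4v_j$. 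For $(0,5)$ there is a single quadratic, and one must show that one positive-definite factor cannot drive the long interior block negative in $(+,+,-,-,-,-,-,+)$, $(+,+,-,-,-,-,+,+)$, $(+,+,+,-,-,-,-,+)$ and $(+,-,-,-,-,-,-,+)$; for $(+,-,-,-,-,+,-,+)$ with $(0,3)$ and for $(+,-,-,-,-,-,-,+)$ with $(0,3)$ two quadratics are available and a finer inequality is required, whereas the same pattern with $(0,1)$ (three quadratics) is realizable, which pins down exactly where the obstruction lives.

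\emph{Main obstacle.} The hard part is entirely the last paragraph: each nonrealizability is a genuine inequality in coefficient space, not a counting statement, so Descartes' rule alone never suffices and one needs a bespoke argument. A Rolle-type descent to $P'$, whose sign pattern is $\bar{\sigma}$ with its last sign removed and which must carry at least $k-1$ of the negative roots, supplies useful auxiliary constraints on the pattern of $P'$ and can shorten several of these inequalities. Equally delicate is \emph{completeness}: I must verify that the constructive toolkit genuinely covers all $1466$ remaining cases and that no seventh obstruction has been overlooked, which in practice means cross-checking the realizable list against the constraints from the $P'$-descent and against the equivalences under the $\mathbb{Z}_2\times \mathbb{Z}_2$-action.
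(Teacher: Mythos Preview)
The paper does not contain a proof of Theorem~\ref{tmd7}: the result is quoted from \cite{FoKoSh}, where the case $d=7$ was settled, and the present paper only records it as background before attacking the six outstanding $d=8$ cases. There is therefore no proof in this paper against which to compare your proposal.

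As a plan, your outline is consonant with the general strategy of \cite{AlFu} and \cite{FoKoSh}: realizability via products of lower-degree realizations, multiplication by linear factors with extreme roots, and perturbation from multiple-root configurations; nonrealizability via case-by-case coefficient inequalities extracted from the factorization $P=(\text{all-positive part})\times(\text{quadratics})$. But what you have written is a sketch, not a proof. On the realizable side, ``running these constructions over the $1466$ nonexceptional cases \ldots\ dispatches all of them'' hides a substantial finite verification that must actually be carried out (or cited). On the nonrealizable side you have only described the \emph{shape} of the argument: for the four $(0,5)$ cases one must really derive the contradiction from a single quadratic factor, and for the two $(0,3)$ cases the interaction of two quadratic factors makes the obstruction genuinely delicate. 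In \cite{FoKoSh} each of these six cases required its own tailored argument, often passing first to a degenerate boundary configuration with multiple roots (compare the reductions in Lemmas~\ref{RRRRbis} and~\ref{basiclemma} of the present paper for $d=8$). Your descent-to-$P'$ idea is a legitimate auxiliary tool, but it does not by itself close any of the six cases; the actual inequalities remain to be produced.
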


For $d=8$ the partial answer from \cite{FoKoSh} 
can be summarized by the following theorem. In 
\cite{FoKoSh} this result is formulated differently, but equivalently. 
In particular, the 
authors of \cite{FoKoSh} have not noticed that the number of cases 
for which the 
answer still remained unknown can be decreased by one due to the standard 
$\mathbb{Z}_2\times \mathbb{Z}_2$-action.

\begin{tm}\label{tmd8}
(1) For $d=8$ there are $3648$ possible combinations of sign pattern 
and admissible pair (up to the standard 
$\mathbb{Z}_2\times \mathbb{Z}_2$-action). Of these exactly $13$ are known to 
be nonrealizable: 

\noindent $(+,+,-,-,-,-,-,+,+)~~,~~(+,-,-,-,-,-,-,+,+)~~,~~
(+,+,+,+,-,-,-,-,+)$ and $(+,+,+,-,-,-,-,-,+)$ with $(0,6)$; 

\noindent $(+,-,+,-,-,-,+,-,+)$ and $(+,-,+,-,+,-,-,-,+)$ with $(0,2)$; 

\noindent $(+,-,+,-,-,-,-,-,+)$ and $(+,-,-,-,+,-,-,-,+)$ 
with $(0,2)$ and $(0,4)$; 

\noindent $(+,-,-,-,-,-,-,-,+)$ with $(0,2)$, $(0,4)$ and $(0,6)$.  

(2) For exactly another $6$ cases it is not known whether they are realizable 
or not (we list the sign patterns and their reverted ones which will be needed 
later):

$$\begin{array}{lllll}
{\rm Case~1:}&&\sigma _1:=(+,+,+,-,-,-,-,+,+)&&{\rm with}~~(0,6)\\ 
&&&&\sigma _1^r=(+,+,-,-,-,-,+,+,+)\\ 
{\rm Case~2:}&&\sigma _2:=(+,+,-,+,-,-,-,+,+)&&{\rm with}~~(4,0)\\ 
&&&&\sigma _2^r=(+,+,-,-,-,+,-,+,+)\\  
{\rm Case~3:}&&\sigma _3:=(+,+,-,+,-,+,-,-,+)&&{\rm with}~~(4,0)\\ 
&&&&\sigma _3^r=(+,-,-,+,-,+,-,+,+)\\ 
{\rm Case~4:}&&\sigma _4:=(+,+,+,-,-,+,-,+,+)&&{\rm with}~~(4,0)\\ 
&&&&\sigma _4^r=(+,+,-,+,-,-,+,+,+)\\ 
{\rm Case~5:}&&\sigma _5:=(+,+,+,+,-,+,-,-,+)&&{\rm with}~~(4,0)\\ 
&&&&\sigma _5^r=(+,-,-,+,-,+,+,+,+)\\  
{\rm Case~6:}&&\sigma _6:=(+,+,-,+,-,-,-,-,+)&&{\rm with}~~(4,0)\\ 
&&&&\sigma _6^r=(+,-,-,-,-,+,-,+,+)~.
\end{array}$$
\end{tm}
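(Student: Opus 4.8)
The plan is to treat this statement as a reorganization of the corresponding result of \cite{FoKoSh}, so the proof splits into three tasks: (i) recover the total number of combinations by a direct enumeration; (ii) import the thirteen nonrealizability results, rewriting them up to the standard action so that the pairs take the normalized form $(0,n)$ or $(n,0)$; and (iii) establish the announced correction, namely that the seven cases left undecided in \cite{FoKoSh} fall into only six orbits under the $\mathbb{Z}_2\times\mathbb{Z}_2$-action. The genuinely new content of the statement is concentrated in task (iii).

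First I would carry out the enumeration for part (1). A degree $8$ polynomial has $9$ coefficients and hence $8$ consecutive pairs, so every sign pattern has Descartes pair $(p,8-p)$, and the pattern is determined once one fixes the leading sign together with the subset of the $8$ transitions that are sign changes; this gives $\binom{8}{p}$ patterns per leading sign for each $p$. For a pattern with Descartes pair $(p,8-p)$ the admissible pairs are exactly those with $0\le pos\le p$, $0\le neg\le 8-p$ and the two parity constraints of (\ref{pn}), of which there are $(\lfloor p/2\rfloor+1)(\lfloor(8-p)/2\rfloor+1)$. Summing the product $\binom{8}{p}(\lfloor p/2\rfloor+1)(\lfloor(8-p)/2\rfloor+1)$ over $p=0,\dots,8$ and taking both signs of the leading coefficient into account reproduces the total $3648$, which is the bookkeeping assertion of part (1).

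For the thirteen nonrealizable cases I would not reprove them from scratch but cite the obstructions established in \cite{FoKoSh}; these are of the now-standard type, in which one assumes a polynomial with the prescribed sign pattern and prescribed numbers of positive and negative roots and derives a contradiction by combining Descartes' rule with Rolle's theorem applied to $P$ and its derivatives, or with sign-forcing at $0$ and at $+\infty$, in the spirit of Grabiner's example. The only work here is transcription: using the equivariance recorded in the Remarks above, under which $(\bar{\sigma},(p,n))$, $(\bar{\sigma}^r,(p,n))$, $(\bar{\sigma}_m,(n,p))$ and $(\bar{\sigma}^r_m,(n,p))$ are realizable or not simultaneously, I would replace where convenient a listed case by its $m$- or $r$-image so as to put every pair in the normalized form of the statement, and then verify that the resulting list agrees with the thirteen displayed patterns.

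The delicate point, which I expect to be the real obstacle, is part (2), and specifically the reduction from seven open cases to six. The six patterns $\sigma_1,\dots,\sigma_6$ all carry the pair $(0,6)$ or $(4,0)$, and a direct comparison of each $\sigma_i$ with the reverted patterns $\sigma_j^r$ shows that no two of them lie in a common orbit: the operation $r$ preserves the pair while $m$ and $rm$ send $(4,0)\mapsto(0,4)$, so two distinct $(4,0)$-cases could be identified only through $r$, and none of the equalities $\sigma_i=\sigma_j^r$ holds. Hence the redundancy must involve a seventh pattern that \cite{FoKoSh} listed separately; I would exhibit it as the $m$- (equivalently $rm$-) image of one of $\sigma_2,\dots,\sigma_6$, carried with the swapped pair $(0,4)$, and invoke the equivariance above to conclude that it is realizable if and only if the corresponding listed case is, so that it contributes no new open case. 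Verifying this single orbit coincidence cleanly is exactly the bookkeeping overlooked in \cite{FoKoSh}: the care lies entirely in tracking the action on the pair together with the pattern and confirming that precisely six orbits remain.
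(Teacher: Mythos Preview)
Your proposal is essentially aligned with the paper: Theorem~\ref{tmd8} is not proved in this paper at all but is imported from \cite{FoKoSh}, and the only addition is the remark that one of the seven undecided cases there is redundant under the $\mathbb{Z}_2\times\mathbb{Z}_2$-action. You correctly identify this structure and propose exactly the right three subtasks.

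A couple of small points. In your enumeration you obtain $1824$ combinations with leading sign $+$ and then double to reach $3648$; this matches the number quoted, but note the slight tension with the phrase ``up to the standard $\mathbb{Z}_2\times\mathbb{Z}_2$-action'' in the theorem --- that clause in the paper refers to how the nonrealizable cases are listed, not to the raw count. Your arithmetic is correct; just be careful how you word it.

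For task (iii), your argument that no two of $\sigma_1,\dots,\sigma_6$ share an orbit is fine (the pair rules out $m$ and $rm$, and one checks $\sigma_i\neq\sigma_j^r$ directly from the displayed reverted patterns). However, your identification of the seventh case as an $m$-image carried with pair $(0,4)$ is speculative: the present paper never says which case was redundant, and the redundancy could equally be via $r$ with the pair unchanged. To complete this step you actually need to look up the list in \cite{FoKoSh} and exhibit the explicit coincidence; you cannot deduce it from the information in this paper alone. The paper itself does no more than assert the reduction, so in that sense you are already going further than the source.
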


The aim of the present paper is to definitely settle the case $d=8$. Namely, we 
prove the following theorem:

\begin{tm}\label{tmnew}
The $6$ cases of part (2) of Theorem~\ref{tmd8} are not realizable.
\end{tm}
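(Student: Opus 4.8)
The plan is to argue by contradiction. Suppose that one of the six combinations is realized by a degree~$8$ polynomial $P$ with positive leading coefficient. First I would use the $\mathbb{Z}_2\times\mathbb{Z}_2$-action to put every case in the normalized form ``$P$ has no negative root and exactly $p$ distinct positive roots'', i.e. with admissible pair $(p,0)$; for Case~1 this means passing from $(0,6)$ to $(6,0)$ via $\bar\sigma\mapsto\bar\sigma_m$. Since then $\sigma_0=\sigma_8=+$, the degree is even and $P$ has no negative root, one has $P(x)>0$ for every $x\le 0$. Note that in Cases~1,\,2,\,4,\,5,\,6 the number $p$ equals the number of sign changes of the normalized pattern, so $P$ attains the Descartes bound for positive roots, whereas in Case~3 one has $p=4<6=p_{\bar\sigma}$, so the bound is not attained; I expect this last case to behave differently.

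The main engine is Rolle's theorem combined with Descartes' rule applied to the successive derivatives. If $P$ has sign pattern $(\sigma_0,\dots,\sigma_8)$, then $P^{(k)}$ has sign pattern $(\sigma_0,\dots,\sigma_{8-k})$, again with leading sign $+$. When $P$ has $p$ distinct positive roots, Rolle gives $P'$ at least $p-1$ positive roots, while Descartes bounds this number above by the number of sign changes of $(\sigma_0,\dots,\sigma_7)$. Whenever these bounds coincide and $P'$ happens to have no negative root, the pair (positive roots of $P'$, $0$) together with the pattern of $P'$ is, after applying $\bar\sigma\mapsto\bar\sigma_m$, one of the degree~$7$ combinations declared nonrealizable in Theorem~\ref{tmd7}, giving an immediate contradiction. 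For instance, Case~6 produces from $P'$ the pattern $(+,-,-,-,-,+,-,+)$ with $(0,3)$, and Case~1 produces $(+,+,+,-,-,-,-,+)$ with $(0,5)$; both are forbidden by Theorem~\ref{tmd7}. A parallel reduction, after forcing the positive-root count of $P'$ down to its Rolle value, is meant to dispose of Case~2.

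The main obstacle in carrying this out is twofold. First, the hypothesis ``$P'$ has no negative root'' is not free: although $P>0$ on $(-\infty,0]$, the graph of $P$ may have local minima there, creating negative roots of $P'$. I would control these through the Budan--Fourier theorem applied on $(-\infty,0)$, reading the signs of the Fourier sequence $P(0),P'(0),\dots,P^{(8)}(0)$ (which are precisely the reversed sign pattern) off the coefficients, together with the parity constraint coming from the signs of $P'$ at $-\infty$ and at $0$; this already restricts the number of negative roots of $P'$ to an even value, and the delicate point is to exclude the values $2$ and $4$, occasionally by passing to $P''$ or to the reverted polynomial $x^8P(1/x)$ and repeating the argument. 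Second, in the cases where Rolle and Descartes give different bounds for the positive roots of $P'$, the possible extra positive root must be removed; here I would again use Budan--Fourier, now on a subinterval $(0,c)$ with $c$ chosen between two consecutive roots of $P$, to show that no additional sign change of the Fourier sequence is available.

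I expect Cases~3,\,4 and~5 to be the genuinely hard part, since for them a single differentiation does not land on any pattern of the degree~$7$ forbidden list. For these I would drop the derivative reduction and argue directly on the coefficients: write $P(x)=\bigl(\prod_{i=1}^{4}(x-x_i)\bigr)Q(x)$ with $x_i>0$ and $Q$ a positive-definite quartic (two conjugate pairs of roots, hence $Q>0$ on $\mathbb{R}$), express the coefficients of $P$ through the elementary symmetric functions of the $x_i$ and the sign-constrained coefficients of $Q$, and exhibit two coefficients whose prescribed signs cannot hold simultaneously for any admissible choice of the data. Case~3, whose admissible pair is not extremal, is the least rigid and is where I expect essentially all the difficulty to concentrate; the core of the proof will be the explicit sign incompatibility among the coefficients there, rather than the root-counting bookkeeping used for the other five cases.
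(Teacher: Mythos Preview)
Your reduction by differentiation to the degree~$7$ list has a genuine gap that the tools you name cannot close. Take Case~1 with $\sigma_1=(+,+,+,-,-,-,-,+,+)$ and $(0,6)$: you correctly observe that $P'$ has sign pattern $(+,+,+,-,-,-,-,+)$ and, by Rolle and Descartes, exactly five negative roots. But $P'$ is allowed zero \emph{or two} positive roots, and only the pair $(0,5)$ is forbidden by Theorem~\ref{tmd7}; the pair $(2,5)$ is not. The Budan--Fourier count you invoke gives nothing new here: the Fourier sequence $P(0),P'(0),\dots,P^{(8)}(0)$ has signs equal (up to positive factors) to the coefficients $a_0,a_1,\dots,a_8$, so $V(0)-V(+\infty)$ \emph{is} the Descartes bound, namely~$2$. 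Parity alone leaves $0$ and $2$ both open. The same obstruction arises in Case~6 (where $P'$, of degree~$7$ with three positive roots forced, may have $0$, $2$ or $4$ negative roots) and in Case~2 (where in addition Rolle gives three positive roots of $P'$ but Descartes allows four, so you do not even land on a single candidate pair). Nothing in your sketch excludes these parasitic possibilities, and there is no reason to expect them to be vacuous: $P$ has a conjugate complex pair whose real part is unconstrained, and this can create critical points of either sign without violating the hypothesis that $P$ itself has no root there.

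The paper's proof is organized around a completely different idea: instead of lowering the degree, it lowers the number of free parameters by \emph{deforming} a hypothetical realizing polynomial inside the same sign pattern until several roots coalesce. For Case~1 one reaches a polynomial of the form $(x+\alpha)^2(x+\kappa)^2(x+\tau)^2(x-\delta)^2$ still with pattern $\sigma_1$ (Lemma~\ref{RRRRbis}), and then a three--parameter computation (Lemma~\ref{LLL}) shows this is impossible. For Cases~2--6 one first deforms to a polynomial with a quadruple root at~$1$ (Lemma~\ref{basiclemma}), then to $(x-1)^4(x+S/2)^2(x^2+Mx+N)$ with $N\ge M^2/4$ (Lemma~\ref{nextlemma}); only three parameters $S,M,N$ remain and the sign conditions on the coefficients $w_j$ are shown to be incompatible by elementary inequalities. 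Your proposed direct analysis of $P=\prod_{i=1}^4(x-x_i)\cdot Q(x)$ for Cases~3,\,4,\,5 carries eight free parameters; without a reduction of this kind the ``exhibit two incompatible coefficients'' step is not a finite check, and that reduction---not the sign bookkeeping---is the substance of the argument.
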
 

For Case~1 the proof is given in Section~\ref{seccase1}. 
Cases~2-6 are considered in Section~\ref{seccases26}. The proofs of 
Lemmas~\ref{basiclemma} and \ref{nextlemma} formulated in 
Section~\ref{seccases26} are given in the Appendix. 
In the proof of the 
theorem we sometimes use sign patterns having as components not only $+$ 
and/or $-$, but also $0$ (in the sense that the corresponding 
coefficient equals $0$), and in some cases $\pm$ meaning that 
we consider the cases with $+$ and $-$ together. 

As we see, in all cases of nonrealizability one of the components of the 
admissible pair equals $0$. The same is true for $d=9$ and $10$, 
see \cite{FoKoSh}. To finish this section we list the nonrealizable cases 
for $d=8$ by their pairs $(p,n)$; the third column contains the corresponding 
Descartes' pair. In order to have only the pairs $(0,2)$, $(0,4)$ and $(0,6)$ 
as defining the classification we change the sign patterns $\sigma _j$ of 
Cases 2-6 of Theorem~\ref{tmd8} to the corresponding patterns $(\sigma _j)_m$. 
To find easier Cases 1-6 in the table we give their numbers as indices to 
the corresponding sign patterns.

$$\begin{array}{llllll}
(0,2)&(+,-,+,-,-,-,+,-,+)&(6,2)&&(+,-,+,-,+,-,-,-,+)&(6,2)\\
 
&(+,-,+,-,-,-,-,-,+)&(4,4)&&(+,-,-,-,+,-,-,-,+)&(4,4)\\ 

&(+,-,-,-,-,-,-,-,+)&(2,6)&&&\end{array}$$ 

$$\begin{array}{llllll}
(0,4)&(+,-,+,-,-,-,-,-,+)&(4,4)&&(+,-,-,-,+,-,-,-,+)&(4,4)\\ 

&(+,-,-,-,-,-,-,-,+)&(2,6)&&(+,-,-,-,-,+,-,-,+)_2&(4,4)\\

&(+,-,-,-,-,-,-,+,+)_3&(2,6)&&(+,-,+,+,-,-,-,-,+)_4&(4,4)\\ 

&(+,-,+,-,-,-,-,+,+)_5&(4,4)&&(+,-,-,-,-,+,-,+,+)_6&(4,4)
\end{array}$$ 

$$\begin{array}{llllll}
(0,6)&(+,-,-,-,-,-,-,-,+)&(2,6)&&(+,+,-,-,-,-,-,+,+)&(2,6)\\ 

&(+,-,-,-,-,-,-,+,+)&(2,6)&&(+,+,+,+,-,-,-,-,+)&(2,6)\\ 

&(+,+,+,-,-,-,-,-,+)&(2,6)&&(+,+,+,-,-,-,-,+,+)_1&(2,6)

\end{array}$$ 

\begin{rems}\label{remarque1}
{\rm (1) When the sign pattern consists of a sequence of $m_1$ 
pluses followed by a sequence of $m_2$ minuses and then by a sequence of 
$m_3$ pluses, 
where $m_1+m_2+m_3=d+1$, then for the pair $(0,d-2)$ this sign pattern is 
not realizable if $\kappa :=(d-m_1-1)(d-m_3-1)/m_1m_3\geq 4$ (see Proposition~6 
in \cite{FoKoSh}). For the sign patterns with $(0,6)$ in the above table the 
quantity $\kappa$ equals respectively $36$, $25/4$, $15$, $9/2$, 
$8$ and $20/6<4$. The last inequality shows that Proposition~6 of \cite{FoKoSh} 
gives only sufficient, but not necessary conditions for nonrealizability of 
the pair $(0,d-2)$ with the sign patterns containing only two sign changes.

(2) In the problem which we consider an important role is played, although 
this is not always explicitly pointed out, by the {\em discriminant set} of the 
family of monic polynomials. This is the set of values of the coefficients 
for which the polynomial has a multiple root.  
The number of real roots changes, generically by $2$, 
when the tuple of coefficients crosses the 
discriminant set. The stratification of the 
discriminant set is explained in \cite{KhS}. More about discriminants of 
the general 
family of univariate polynomials for degree $4$ or $5$ can be found in 
\cite{Ko}.}
\end{rems} 

{\bf Acknowledgement.} The present paper is a continuation of the research 
on sign patterns and admissible pairs which was started by B.~Z.~Shapiro, 
J.~Forsg{\aa}rd and the author during the latter's stay at the University 
of Stockholm. The author expresses his most sincere gratitude to this 
university and to his former coauthors for this fruitful 
collaboration.

\section{Case 1 is not realizable\protect\label{seccase1}}

The proof that the sign pattern $\sigma _1$ 
is not realizable with the pair $(0,6)$ follows 
from Lemmas~\ref{RRRRbis} and~\ref{LLL}. The following lemma is 
used in the proof of Lemma~\ref{RRRRbis}.

\begin{lm}\label{RRRR}
For any $0<u<v$ there exists a polynomial 
$R=x^8+ax^7+bx^6+cx+d$, where $a>0$, $b>0$, $c>0$, $d>0$ and 
$R(-u)=R'(-u)=R(-v)=R'(-v)=0$. Hence by Descartes' rule of signs 
this polynomial equals $(x+u)^2(x+v)^2S(x)$, where the monic degree $4$ 
polynomial $S$ has no real roots.
\end{lm}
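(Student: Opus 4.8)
The plan is to read the four conditions $R(-u)=R'(-u)=R(-v)=R'(-v)=0$ as a linear system in the unknowns $a,b,c,d$, solve it, verify that all four entries come out positive, and then deduce the factorization and the absence of real roots of $S$ from the sign of the coefficients. To set up the system cleanly I would write $R(x)=x^{6}(x^{2}+ax+b)+(cx+d)$ and impose that $Q(x):=(x+u)^{2}(x+v)^{2}$ divides $R$. Reduction modulo the monic quartic $Q$ is the most economical route: computing the remainders of $x^{8}$, $x^{7}$ and $x^{6}$ modulo $Q$, the condition $R\equiv 0\pmod{Q}$ splits, upon collecting the coefficients of $x^{3}$ and $x^{2}$, into two linear equations for $a$ and $b$ alone, after which $c$ and $d$ are read off from the coefficients of $x^{1}$ and $x^{0}$. (Equivalently one writes $R=Q\cdot S$ with $S=x^{4}+s_{3}x^{3}+s_{2}x^{2}+s_{1}x+s_{0}$ and forces the four middle coefficients of the product to vanish; this is the same $4\times4$ system written differently.)

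Existence and uniqueness are routine: the determinant of the system is a nonzero polynomial in $u,v$ that does not vanish for $0<u<v$ (a vanishing determinant would produce a nonzero polynomial of the sparse shape $\bar a x^{7}+\bar b x^{6}+\bar c x+\bar d$ divisible by $Q$, and the four Hermite conditions at $-u$ and $-v$ are independent on this four-dimensional space when $u\neq v$). Hence there is a unique solution $(a,b,c,d)$, rational in $u$ and $v$.

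The main obstacle is positivity, and I expect this sign bookkeeping, rather than any conceptual point, to be the real content of the lemma. To organize it I would first exploit the scaling $x\mapsto\lambda x$, $R(x)\mapsto\lambda^{-8}R(\lambda x)$, which sends $(u,v)$ to $(u/\lambda,v/\lambda)$ and multiplies $a,b,c,d$ by positive powers of $\lambda^{-1}$; since positivity is scale invariant, choosing $\lambda=u$ reduces everything to the one-parameter case $u=1$, $v=w$ with $w>1$. Then $a,b,c,d$ become explicit rational functions of $w$ with manifestly positive denominators, and it remains to check that each numerator is positive for all $w>1$. I would do this by displaying the numerators as combinations of monomials in $w$ with positive coefficients, factoring out $(w-1)$ where a single root sits at $w=1$.

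Once $a,b,c,d>0$ are established the conclusions are immediate. The vanishing of $R$ and $R'$ at $-u$ and at $-v$ forces double roots there, so $R=(x+u)^{2}(x+v)^{2}S$ with $S$ monic of degree $4$. All nonzero coefficients of $R$ are positive, so $R$ has no positive roots; applying Descartes' rule to $R(-x)=x^{8}-ax^{7}+bx^{6}-cx+d$, whose sign sequence $+,-,+,-,+$ has exactly four changes, shows $R$ has at most four negative roots. Since the prescribed double roots already account for four negative roots counted with multiplicity, these exhaust all the real roots of $R$; consequently $S$ has no real root, which is the assertion of the lemma.
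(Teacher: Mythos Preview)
Your proposal is correct and follows essentially the same approach as the paper: set up and solve the $4\times4$ linear system for $(a,b,c,d)$, then verify positivity and apply Descartes' rule. The paper simply displays the explicit two-variable rational expressions in $u,v$ (computed with MAPLE) and observes that every monomial in the common denominator and in each numerator carries a positive coefficient, so positivity for $u,v>0$ is immediate---your scaling reduction to one parameter and the anticipated $(w-1)$ factors turn out to be unnecessary.
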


\begin{proof}
Consider the system of linear equations with unknown variables 
$a$, $b$, $c$ and $d$ and parameters $u>0$ and $v>0$:

$$\begin{array}{lll}
u^8-au^7+bu^6-cu+d=0&~~~~~&8u^7-7au^6+6bu^5-c=0\\
v^8-av^7+bv^6-cv+d=0&~~~~~&8v^7-7av^6+6bv^5-c=0~.\end{array}$$
One can solve this system w.r.t. $a$, $b$, $c$ and $d$ (using, say, MAPLE) 
and express the solutions as functions of $u$ and $v$. Set 

$$g:=35u^4v^4+20u^3v^5+4u^7v+10u^2v^6+u^8+4uv^7+20u^5v^3+v^8+10u^6v^2~.~~~
{\rm Then}$$

$$\begin{array}{lll}a&=&
(2/g)(u^9+4u^8v+10v^2u^7+20v^3u^6+35v^4u^5\\ \\ 
&&+35v^5u^4+20v^6u^3+10v^7u^2+4v^8u+
v^9)\\ \\ 
b&=&(1/g)(u^{10}+4vu^9+10u^8v^2+35u^4v^6\\ \\ 
&&+20u^7v^3+20u^3v^7+35u^6v^4+10u^2v^8+56u^5v^5+4uv^9+v^{10})\\ \\ 
c&=&(2u^5v^5/g)(5vu^4+6v^2u^3+6v^3u^2+3u^5+5v^4u+3v^5)\\ \\ 
d&=&(u^6v^6/g)(5u^4+8u^3v+9u^2v^2+8uv^3+5v^4)~.
\end{array}$$
All coefficients being positive, if one gives positive values to $u$ and $v$ 
($u\neq v$), one obtains positive values of $a$, $b$, $c$ and $d$.
\end{proof} 

\begin{lm}\label{RRRRbis}
If the sign pattern $\sigma _1$ is realizable with the pair 
$(0,6)$, then there exists a real monic degree $8$ polynomial $H$ 
having three double negative and one double positive root 
and the sign pattern $\sigma _1$.
\end{lm}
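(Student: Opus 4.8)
The statement is purely existential, so the plan is to \emph{manufacture} $H$ from a hypothetical realizing polynomial by a degeneration argument: start from a polynomial realizing $\sigma _1$ with $(0,6)$ and push its roots together until every root is double, taking care that no coefficient ever leaves its prescribed $\sigma _1$-sign. Concretely, let $P$ be monic of degree $8$, realizing $\sigma _1$ with $(0,6)$. Since $P$ has no positive root, six distinct negative roots, and $P(0)\neq 0$, its two remaining roots form a complex-conjugate pair $\alpha \pm i\beta$ with $\beta >0$, so $P=(x^2-2\alpha x+(\alpha ^2+\beta ^2))\prod _{i=1}^{6}(x+r_i)$ with all $r_i>0$. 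The target is $H=(x-w)^2\prod _{j=1}^{3}(x+s_j)^2$ with $w,s_j>0$ and sign pattern $\sigma _1$.

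I would work inside the open region $\Omega \subset \mathbb{R}^8$ of normalized monic degree-$8$ polynomials whose coefficient signs match $\sigma _1$; rescaling $x\mapsto \lambda x$ preserves $\sigma _1$, so one may fix a scale and argue by compactness. Within the subfamily of members of $\overline{\Omega}$ having no positive root and six negative roots counted with multiplicity, I would pass to a maximally degenerate element, first driving $\beta$ down and then coalescing the negative roots in pairs. A clean feature is that the complex pair, once it reaches the real axis, must land strictly to the right of the origin: it cannot reach $0$ because $P(0)\neq 0$, and it cannot land on the negative axis, since that would produce eight negative roots counted with multiplicity and violate the Descartes bound $neg_P\leq 6$ forced by $\sigma _1$. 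Hence the resolved double root is automatically positive, which is exactly the double positive root required for $H$, while the six simple negative roots are simultaneously coalesced into three double negative roots.

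The danger in such a degeneration is that the path leaves $\Omega$, one of the interior coefficients (those of $x^5,x^4,x^3,x^2$, which carry the two sign changes of $\sigma _1$) vanishing before full degeneracy is reached. This is precisely where Lemma~\ref{RRRR} is the key tool: on the boundary face where these four coefficients vanish the sign pattern degenerates to $(+,+,+,0,0,0,0,+,+)$, and the Descartes argument recorded there shows that such a polynomial has at most four negative roots, the extremal case being the two double negative roots of the polynomials $R$ constructed in the lemma. Thus a member of $\overline{\Omega}$ carrying six negative roots can never sit on that face, so the degeneration stays inside $\Omega$ and its all-double-root limit still has sign pattern $\sigma _1$, yielding $H$. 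The step I expect to be the genuine obstacle is controlling these four interior coefficients uniformly along the entire homotopy, that is, excluding premature vanishing on the lower-codimension boundary faces as well, not only on the single face handled directly by Lemma~\ref{RRRR}, so that the three double negative roots and the double positive root are attained at once.
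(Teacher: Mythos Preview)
Your proposal is an outline, not a proof, and it contains a substantive misreading of Lemma~\ref{RRRR} that undermines the strategy you sketch.

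First, Lemma~\ref{RRRR} is not a statement about a boundary face of $\Omega$. It is a \emph{construction}: for any $0<u<v$ it produces a monic polynomial $R=x^8+ax^7+bx^6+cx+d$ with $a,b,c,d>0$ and double roots at $-u,-v$. The point is that the nonzero coefficients of $R$ sit exactly in the positions where $\sigma_1$ has a $+$, so \emph{adding} a positive multiple of $R$ to any polynomial with sign pattern $\sigma_1$ preserves that sign pattern. The paper uses these $R$'s as tailored perturbations to force double negative roots one at a time while staying inside $\Omega$; it does not use Lemma~\ref{RRRR} to exclude anything on the boundary. Your reinterpretation (``on the boundary face where these four coefficients vanish \ldots such a polynomial has at most four negative roots'') is just Descartes' rule applied to $(+,+,+,0,0,0,0,+,+)$ and has nothing to do with the content of Lemma~\ref{RRRR}; moreover it only controls a single codimension-$4$ face, as you yourself note.

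Second, you never specify the deformation. ``Driving $\beta$ down'' by moving the complex pair in the factored form changes all eight coefficients in an uncontrolled way, and there is no reason the path stays in $\Omega$. Likewise ``coalescing the negative roots in pairs'' is not a move: you need a concrete family of polynomials along which the sign pattern is visibly preserved. Your last paragraph explicitly concedes that the core difficulty---keeping the interior coefficients of $x^5,x^4,x^3,x^2$ from vanishing along the whole homotopy---is unresolved. That is the entire lemma.

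For comparison, the paper's argument is fully constructive and proceeds in two stages. Stage one: raise the constant term to create one double negative root, then add positive multiples of two polynomials from Lemma~\ref{RRRR} (chosen so their double roots sit at the already-doubled root and at the remaining negative local minima) to obtain a polynomial $T^*$ with three double negative roots, no positive root, and sign pattern $\sigma_1$. Stage two: write $T^*=(x-\alpha)^2(x-\kappa)^2(x-\tau)^2((x-\delta)^2+A)$, subtract a small multiple of $W=(x-\alpha)^2(x-\kappa)^2(x-\tau)^2$, and then decrease $A$ to $0$; the delicate point---that no coefficient changes sign as $A\to 0$---is handled by invoking Proposition~6 of \cite{FoKoSh} (see Remarks~\ref{remarque1}(1)), which rules out the other sign patterns with fewer pluses. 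Your compactness sketch lacks any substitute for either of these two mechanisms.
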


\begin{proof} 

Suppose that the sign pattern $\sigma _1$ 
is realizable with the 
pair $(0,6)$ by a real degree $8$ polynomial $P$ 
with six distinct negative roots and a complex conjugate pair. One can 
suppose that the values of $P$ at its negative critical points 
are all distinct.   
One can increase the constant term of $P$ (which does not change the sign 
pattern) so that two of the negative roots 
coalesce in a double negative root $\alpha$ which is a local minimum of $P$. 

Denote by $\tau <0$ and $\kappa <0$ the other two minima of $P$ on the negative 
half-axis (one has $P(\tau )<0$ and $P(\kappa )<0$). 

Denote by $R_1$ the polynomial of Lemma~\ref{RRRR} 
with $u=-\alpha$, $v=-\tau$. Then for $\varepsilon >0$ small 
enough the polynomial 
$T:=P+\varepsilon R_1$ has five distinct negative roots 
(four simple and one double). For some positive value of 
$\varepsilon =\varepsilon _0$ 
the polynomial $T$ has a double root at $\kappa$ as well. As the value of $T$  
for each fixed $x>0$ increases with $\varepsilon$, 
$T$ has no real positive root.  

Consider now the polynomial $T_0:=P+\varepsilon _0R_1$. 
Denote by $R_2$ the polynomial of Lemma~\ref{RRRR} 
with $u=-\alpha$, $v=-\kappa$. For some positive value of $\eta$ the polynomial 
$T^*:=T_0+\eta R_2$ has double roots at $\alpha$, $\kappa$ and $\tau$, 
no positive root and has the sign pattern $\sigma _1$.

Set $W:=(x-\alpha )^2(x-\kappa )^2(x-\tau )^2$. Consider the polynomial 
$T^*-\mu W$, $\mu >0$. All coefficients of $W$ are positive. 
Therefore the sign pattern defined by $T^*-\mu W$ has minuses in the 
positions in which $\sigma _1$ has such. 
As $T^*-\mu W$ has six negative roots counted with multiplicity, by  
Descartes' rule of signs the sign pattern defined by it has at most two 
sign changes. 

The polynomial $T^*-\mu W$ for $\mu >0$ small enough is of the form 
$(x-\alpha )^2(x-\kappa )^2(x-\tau )^2((x-\delta)^2+A)$, $\delta >0$, $A>0$. 
Indeed, if $\delta \leq 0$, then all coefficients of $T^*-\mu W$ 
would be positive and it will not define the sign pattern $\sigma _1$.

Decrease $A$. Denote by $\sigma '$ the sign pattern  
defined by $T^*-\mu W$ when $A=0$. When decreasing $A>0$, the signs of the 
coefficients of $x^j$ remain negative for $j=2$, $3$, $4$ and $5$. For 
$j=0$, $1$ and/or $6$ they might change from $+$ to $-$. 
If $\sigma '$ has more minuses than 
$\sigma _1$, 
then it has a sequence of $m_1$ pluses, $m_1\leq 3$, 
followed by a sequence of $m_2$ minuses followed by a sequence of $m_3$ pluses, 
$m_3\leq 2$, $m_1+m_2+m_3=9$ (because $T^*-\mu W$ has $6$ negative roots and 
the sequence of its coefficients must have at least $6$ sign 
preservations, i.e. not more than two sign changes). 

One cannot have $m_1<3$ or $m_3<2$ for $A=0$. Indeed, in this case one can 
increase slightly $A$ without changing $m_1$, $m_2$ and $m_3$ 
and obtain a contradiction with Proposition~6 of~\cite{FoKoSh}, 
see part (1) of Remarks~\ref{remarque1}. 
Hence $m_1=3$, $m_3=2$ and for $A=0$ the polynomial 
$T^*-\mu W$ defines the sign pattern $\sigma _1$, i.e. $\sigma '=\sigma _1$.
\end{proof}

\begin{lm}\label{LLL}
There exists no real monic degree $8$ polynomial having three double negative 
and one double positive root and defining the sign pattern $\sigma _1$.
\end{lm}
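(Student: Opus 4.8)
The plan is to exploit the fact that prescribing the \emph{multiplicities} makes $H$ a perfect square. If $H$ is monic of degree $8$ with double negative roots $-a,-b,-c$ (with $a,b,c>0$) and one double positive root $e>0$, then setting $Q(x):=(x+a)(x+b)(x+c)(x-e)=x^4+px^3+qx^2+rx+s$ one has $H=Q^2$. First I would expand $Q^2$ and read off its coefficients as convolutions of those of $Q$; in particular the coefficients of $x^5$ and of $x^2$ are $h_5=2(pq+r)$ and $h_2=r^2+2qs$, alongside $h_8=1$, $h_7=2p$, $h_1=2rs$ and $h_0=s^2$. Matching signs with $\sigma_1=(+,+,+,-,-,-,-,+,+)$ forces $p>0$ from $h_7>0$; since $s=Q(0)=-abce<0$, the requirement $h_1>0$ gives $r<0$, and then $h_2<0$ gives $q>0$. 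The two conditions I would actually retain are $h_5<0$, i.e. $pq+r<0$, and $h_2<0$, i.e. $r^2+2qs<0$ (the further conditions $h_4<0$ and $h_3<0$ are available in reserve but should not be needed).

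Next I would introduce the elementary symmetric functions $A=a+b+c$, $B=ab+bc+ca$, $C=abc$ of the three negative roots, so that $p=A-e$, $q=B-eA$, $r=C-eB$ and $s=-eC$. The sign requirements $q>0$ and $r<0$ confine the positive root to the window $C/B<e<B/A$, which is nonempty exactly because the Newton inequality $B^2\ge 3AC$ holds for positive $a,b,c$. Substituting into the two retained conditions turns them into quadratic inequalities in $e$: the condition $h_5<0$ becomes $Ae^2-(A^2+2B)e+(AB+C)<0$, and the condition $h_2<0$ becomes $(B^2+2AC)e^2-4BCe+C^2<0$. Both quadratics have positive leading coefficient, so each holds only on the open interval between its two roots.

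The crux is that, inside the admissible window $C/B<e<B/A$, these two intervals are disjoint. Geometrically, $h_2<0$ forces $|r|=eB-C$ to be small, which pins $e$ below the larger root $\gamma_+$ of the second quadratic, whereas $h_5<0$ forces $|r|>pq>0$, which pushes $e$ above the smaller root $\beta_-$ of the first; the two requirements thus pull $e$ in opposite directions. The whole lemma therefore reduces to the single inequality $\gamma_+\le\beta_-$, and I expect this to be the main obstacle: it fails for arbitrary positive $A,B,C$ and uses essentially that $A,B,C$ are the symmetric functions of positive reals, i.e. Maclaurin's inequalities $A^2\ge 3B$ and $B^2\ge 3AC$. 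Concretely I would clear the square roots in $\gamma_+$ and $\beta_-$ and reduce $\gamma_+\le\beta_-$ to a polynomial inequality in $A,B,C$; one checks that it is tightest in the symmetric case $a=b=c$, where it still holds strictly with a definite gap, the general case following from the Maclaurin relations. Once $\gamma_+\le\beta_-$ is established, $h_5$ and $h_2$ cannot both be negative for any admissible $e$, so no such $H$ can carry the sign pattern $\sigma_1$, which is the assertion of the lemma.
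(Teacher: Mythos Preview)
Your setup is correct and in fact matches the paper's choice of conditions: writing $H=Q^2$ with $Q=x^4+px^3+qx^2+rx+s$ and isolating the coefficients $h_5=2(pq+r)$ and $h_2=r^2+2qs$ is exactly what the paper does (there written as $c_5$ and $c_2$ after normalising the positive root to $1$). Your deduction of the auxiliary signs $p>0$, $q>0$, $r<0$, $s<0$ from $h_7,h_1,h_2$ is the same as the paper's Remarks~\ref{remremrem}, and your two quadratic inequalities in $e$ are a correct reformulation.

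The gap is at the step you yourself flag as the crux. The inequality $\gamma_+\le\beta_-$ \emph{is} the lemma; everything before it is bookkeeping. Your proposed justification --- clear square roots, observe that the resulting polynomial inequality is ``tightest'' at $a=b=c$, then invoke Maclaurin --- is not a proof. Two issues: first, ``tightest at the symmetric point'' is an assertion about the behaviour of a multivariable polynomial on a semialgebraic set and needs an argument (a Lagrange-multiplier or Schur-type reduction), not a numerical check; second, the Maclaurin inequalities $A^2\ge 3B$, $B^2\ge 3AC$ are strictly weaker than the condition that $x^3+Ax^2+Bx+C$ have three real roots (e.g.\ $A=4$, $B=5$, $C=1$ satisfies both Maclaurin inequalities but the cubic has a single real root). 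So even if the polynomial inequality you reach were sharp on the Maclaurin boundary, that would not by itself cover all hyperbolic $(A,B,C)$ --- and conversely, if it happens to hold on the larger Maclaurin cone, that requires proof rather than the remark that Maclaurin is necessary.

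For comparison, the paper does not try to verify $\gamma_+\le\beta_-$ directly. It normalises $e=1$, fixes $\alpha=A>1$, and works in the $(\beta,\gamma)$-plane: $\{c_5=0\}$ is a line, $\{c_2=0\}$ a hyperbola, and the hyperbolicity condition is the cubic discriminant curve $\mathcal{H}$. The argument is a case-by-case geometric analysis (Lemma~\ref{severalsets}) showing that the lens $\{c_2<0,\,c_5<0\}$ never meets the hyperbolic region, using the full discriminant rather than the Newton/Maclaurin relaxations. If you want to carry your approach through, you will need an argument of comparable strength for the single inequality $\gamma_+\le\beta_-$; the symmetric-case check plus Maclaurin is not yet that argument.
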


\begin{proof} 
Assume that such a polynomial exists. 
Without loss of generality one can assume that it is the square of the 
polynomial 

$$L:=(x^3+\alpha x^2+\beta x+\gamma )(x-1)=
x^4+(\alpha -1)x^3+(\beta -\alpha )x^2+(\gamma -\beta )x-\gamma$$ 
in which the first factor 
has three distinct negative roots. Hence $\alpha >0$, $\beta >0$ and 
$\gamma >0$. The coefficient of $x^s$ of $L^2$ is denoted by $c_s$. Hence 

\begin{equation}\label{ccc}
\begin{array}{ll}c_7=2(\alpha -1)&c_6=2(\beta -\alpha )+(\alpha -1)^2\\ 
c_5=2((\gamma -\beta )+(\alpha -1)(\beta -\alpha ))&c_2=(\gamma -\beta )^2-
2(\beta -\alpha )\gamma \\ 
c_1=-2\gamma (\gamma -\beta )&c_0=\gamma ^2~.
\end{array}
\end{equation}

\begin{rems}\label{remremrem}
{\rm (1) As $L^2$ defines the sign pattern $\sigma _1$, 
one must have $c_7>0$ and $c_1>0$ 
from which follows $\alpha >1$ and $\gamma <\beta$. These two inequalities 
combined with $c_2<0$ yield $\beta >\alpha$.

(2) The condition $\beta >\alpha$ implies that the absolute value of 
at least one of the roots of the polynomial $x^3+\alpha x^2+\beta x+\gamma$ 
(which are all negative) is $>1$.}
\end{rems}

In what follows we denote by $\mathcal{P}$ the set 
$\{ \alpha >1,\beta >0,\gamma >0\}$. For each $\alpha =\alpha _0>1$ fixed 
the set $\mathcal{P}|_{\alpha =\alpha _0}$ is the positive quadrant 
$\{ \beta >0,\gamma >0\}$. 

\begin{lm}\label{severalsets}
Suppose that $\alpha =\alpha _0>1$ is fixed. Then:

(1) The condition 
$c_5=0$ defines a straight line $\mathcal{C}_5$. 
Its slope $2-\alpha _0$ is positive for $\alpha _0\in (1,2)$, 
zero for $\alpha _0=2$ and negative for $\alpha _0>2$. For $\alpha _0>2$ 
the intersection $(\mathcal{P}|_{\alpha =\alpha _0})\cap \mathcal{C}_5$ 
is a segment. 

(2) The condition $c_2=0$ defines 
a hyperbola with centre $(2\alpha _0/3,\alpha _0/3)$
and with asymptotes $\gamma -\alpha _0/3=(2\pm \sqrt{3})(\beta -2\alpha _0/3)$. 
One of its branches (denoted by $\mathcal{C}_2$) 
belongs to the set $\mathcal{P}|_{\alpha =\alpha _0}$; the other one 
is denoted by $\mathcal{C}_2^*$. The point $(0,0)$ belongs to 
$\mathcal{C}_2^*$ and the tangent line to $\mathcal{C}_2^*$ at $(0,0)$ is 
horizontal. Hence 
$\mathcal{C}_2^*\cap (\mathcal{P}|_{\alpha =\alpha _0})=\emptyset$.  

(3) For $\alpha _0>\sqrt{3}$ 
the intersection $\mathcal{C}_5\cap \mathcal{C}_2$ consists of the two 
points 

$$I_1:=(\alpha _0,\alpha _0)~~~~{\it and}~~~~
I_2:=(\alpha _0(\alpha _0^2-1)/(\alpha _0^2-3),
\alpha _0(\alpha _0-1)^2/(\alpha _0^2-3))~.$$
For $\alpha _0\in (1,\sqrt{3}]$ one has $\mathcal{C}_5\cap \mathcal{C}_2=I_1$. 
The tangent line to $\mathcal{C}_2$ at $I_1$ is vertical, at $I_2$ its slope 
is negative for $\alpha _0>3$, zero for $\alpha _0=3$ and positive 
for $\alpha _0\in (1,3)$. For $\alpha _0>3$ this slope is negative for 
the points of $\mathcal{C}_2$ which are between $I_1$ and $I_2$.

(4) The set of hyperbolic polynomials is defined by the condition 

\begin{equation}\label{conditionhyperbolicity}
4(\beta -\alpha _0^2/3)^3+27(\gamma +2\alpha _0^3/27-
\alpha _0\beta /3)^2\leq 0~.
\end{equation}
The corresponding equality defines a curve $\mathcal{H}$ 
having as only singular point a cusp at 
$J:=(\alpha _0^2/3,\alpha _0^3/27)$. The set of hyperbolic polynomials 
is the closure of the interior of $\mathcal{H}$. 
The slope of the tangent lines to $\mathcal{H}$ 
at its regular points (and the one of 
the geometric semi-tangent at its cusp) is positive for 
$\beta >0$, $\gamma >0$. The maximal values of the coordinates of the 
restriction of $\mathcal{H}$ to $\{ \beta >0,\gamma >0\}$ are attained, 
simultaniously for $\beta$ and $\gamma$, at and only at its cusp. 

(5) The curve $\mathcal{H}$ intersects the line $\mathcal{C}_5$ exactly when 
$\alpha _0\geq u_0:=3.787042615\ldots$. For $\alpha _0<u_0$ the cusp point 
$J$ lies below the line $\mathcal{C}_5$. The point $I_2$ 
does not define a hyperbolic polynomial for any $\alpha _0>1$.
\end{lm}

\begin{figure}[htbp]
%\includegraphics[scale=0.5]{parthetanegfirstfour.eps}
%\centerline{\hbox{\includegraphics[scale=0.7]{parthetanegfirstfour.eps}}}
\centerline{\hbox{\includegraphics[scale=0.7]{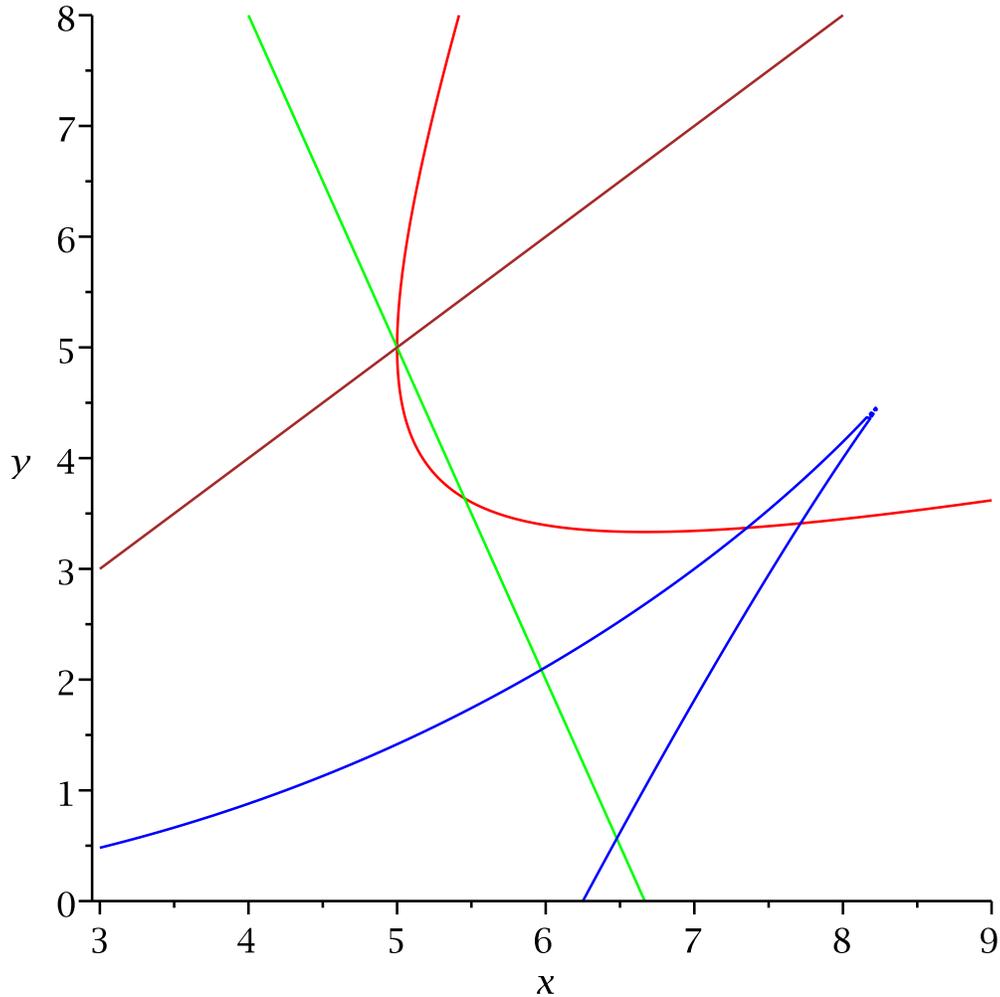}}}
%\centerline{\hbox{\epsfxsize=10cm \epsfbox{k=1234.pdf}}}
    \caption{The sets $\mathcal{C}_2$, $\mathcal{C}_5$, 
$\{ \beta =\gamma \}$ and $\mathcal{H}$.}
\label{c2c5dd}
\end{figure}

Before proving Lemma~\ref{severalsets} we finish the proof of Lemma~\ref{LLL}. 
On Fig~\ref{c2c5dd} we show the sets $\mathcal{C}_2$ (branch of a hyperbola), 
$\mathcal{C}_5$ (straight line with negative slope), the straight line  
$\{ \beta =\gamma \}$ and $\mathcal{H}$ (curve with a cusp point) 
for $\alpha _0=5$. The set $\{ c_2<0\}$ 
is the interior of the branch $\mathcal{C}_2$ and the set $\{ c_2<0,c_5<0\}$ 
is the lens-shaped domain between $\mathcal{C}_2$ and 
$\mathcal{C}_5$. The point $I_1$ is the triple intersection of 
$\mathcal{C}_2$, 
$\mathcal{C}_5$ and 
$\{ \beta =\gamma \}$.

\begin{rem}
{\rm For $\alpha _0\in (1,\sqrt{3}]$ the set $\{ c_2<0,c_5<0\}$ is not compact 
and for $\alpha _0\in (1,\sqrt{3})$ the point $I_2$ 
belongs not to $\mathcal{C}_2$, but to 
$\mathcal{C}_2^*$; $I_2$ is at $\infty$ for $\alpha _0=\sqrt{3}$. 
Indeed, the slopes of the asymptotes of 
the hyperbola $\{ c_2=0\}$ equal $2\pm \sqrt{3}$ while the slope of 
$\mathcal{C}_5$ equals $2-\alpha _0$,  
see parts (1) and (2) of Lemma~\ref{severalsets}.}
\end{rem}

There exists a unique point $Z\in \mathcal{C}_2$ the tangent to $\mathcal{C}_2$ 
at which is horizontal. Indeed, the branches $\mathcal{C}_2$ and 
$\mathcal{C}_2^*$ of the hyperbola $\{ c_2=0\}$ are symmetric w.r.t. its centre 
$(2\alpha _0/3,\alpha _0/3)$, see part (2) of Lemma~\ref{severalsets}. The only 
point of $\mathcal{C}_2^*$ at which the tangent line is horizontal is the 
origin, see part (2) of Lemma~\ref{severalsets} (the fact that $(0,0)$ is the 
only such point follows from the convexity of the hyperbola). 
Hence $Z=(4\alpha _0/3,2\alpha _0/3)$. 

Compare the $\gamma$-coordinates of the points $Z$ and $J$ (see part (4) 
of Lemma~\ref{severalsets}). For $\alpha _0<3\sqrt{2}=4.2\ldots$ one has 
$2\alpha _0/3>\alpha _0^3/27$. The point $Z$ has the least possible 
$\gamma$-coordinate of the points of $\mathcal{C}_2$ whereas $J$ has the 
largest possible $\gamma$-coordinate of the points of 
$\mathcal{H}\cap \mathcal{P}_{\alpha =\alpha _0}$, see part (4) of 
Lemma~\ref{severalsets}. Hence for $\alpha _0\in (1,3\sqrt{2})$ one has 

$$\mathcal{C}_2\cap (\mathcal{H}\cap \mathcal{P}_{\alpha =\alpha _0})=\emptyset 
~~~\, \, {\rm and}~~~\, \,  
\{ c_2<0,c_5<0\} \cap (\mathcal{H}\cap \mathcal{P}_{\alpha =\alpha _0})=
\emptyset ~.$$
Recall that $u_0<3\sqrt{2}$, see part (5) of Lemma~\ref{severalsets}. 
Hence for $\alpha _0=u_0$ the cusp $J$ of $\mathcal{H}$ has a smaller 
$\gamma$-coordinate than $I_2$. As $I_2$ does not belong to $\mathcal{H}$ 
(for any $\alpha _0>1$, see part (5) of Lemma~\ref{severalsets}), for 
$\alpha _0>u_0$ the points $I_1$ and $I_2$ are above the two intersection points 
$K_1$ and $K_2$ of $\mathcal{H}$ with $\mathcal{C}_5$ (``above'' 
means ``have larger $\gamma$-coordinates''); $K_1$ is presumed to be 
above $K_2$. Denote by $L^*$ and $L^{**}$ 
the vertical straight lines passing through $I_2$ and $K_1$. Hence for $a>3$ 
the domain $\{ c_2<0,c_5<0\}$ lies to the left of $L^*$ and above $I_2$, see 
parts (1) and (3) of Lemma~\ref{severalsets}. At the same time the part of 
$\mathcal{H}\cap \mathcal{P}_{\alpha =\alpha _0}$ which is to the left of 
$L^*$ (hence to the left of $L^{**}$ as well) lies below $K_1$ hence below 
$I_2$, so the domain $\{ c_2<0,c_5<0\}$ contains no hyperbolic polynomial. 
This proves Lemma~\ref{LLL} and Theorem~\ref{tmnew}.
\end{proof}

\begin{proof}[Proof of Lemma~\ref{severalsets}]
The first two statements of part (1) are to be checked directly. To prove the 
third statement it suffices to compute the intersection points of the line 
$\mathcal{C}_5$ with the $\beta$- and $\gamma$-axes. These points are 
$(0,\alpha _0(\alpha _0-1))$ and 
$(\alpha _0(\alpha _0-1)/(\alpha _0-2),0)$.

Prove part (2). The determinants of the matrices $M_1=\left( \begin{array}{rrr}
1&-2&0\\ -2&1&\alpha _0\\ 0&\alpha _0&0\end{array}\right)$ and 
$M_2=\left( \begin{array}{rr}
1&-2\\ -2&1\end{array}\right)$ 
(defined after the quadric $c_2|_{\alpha =\alpha _0}$) 
are nonzero and $M_2$ has one 
positive and one negative eigenvalue. Hence the equation $c_2=0$ defines 
a hyperbola.
To find its centre one sets $\beta \mapsto \beta +\mu$, 
$\gamma \mapsto \gamma +\nu$ and one looks for $(\mu ,\nu )$ such that the 
linear terms in the equation $c_2=0$ disappear. This yields the system

$$ -4\mu +2\nu +2\alpha _0=0~~~~,~~~~2\mu-4\nu =0$$
whose solution is $(\mu ,\nu )=(2\alpha _0/3,\alpha _0/3)$. The slopes of the 
asymptotes are solutions to the equation $\lambda ^2-4\lambda +1=0$ deduced 
from the matrix $M_2$. The branch 
$\mathcal{S}_2$ occupies the upper right sector defined by the asymptotes.     

The equation $c_2=0$ is satisfied for $(\beta ,\gamma )=(0,0)$. To compute the 
equation of the tangent line to the hyperbola $\{ c_2=0\}$ one writes 

\begin{equation}\label{tangent}
(-4\beta +2\gamma +2\alpha )d\gamma +(2\beta -4\gamma )d\beta =0
\end{equation}
in which the coefficient of $d\beta$ is $0$ for $(\beta ,\gamma )=(0,0)$. 
The tangent at $(0,0)$ being horizontal the branch $\mathcal{S}_2^*$ belongs 
entirely to the lower half-plane and does not intersect the set 
$\mathcal{P}|_{\alpha =\alpha _0}$.  

Prove part (3). 
Set $B:=\gamma -\beta$, $A:=\beta -\alpha _0$. The conditions $c_5=0$ and 
$c_2=0$ read (see (\ref{ccc})):

$$B=-(\alpha _0-1)A~~,~~-2(B+A+\alpha _0)A+B^2=0$$
from which one finds that either $A=0$ (hence $B=0$ and 
$\beta =\gamma =\alpha _0$, this defines the point $I_1$) 
or $-2(-(\alpha _0-1)A+A+\alpha _0)+(\alpha _0-1)^2A=0$. 
The last equality implies 
$A=2\alpha _0/(\alpha _0^2-3)$. Hence  
%Hence $\alpha >\sqrt{3}$ because $\beta >\alpha$ 
%(i.e. $A>0$) on $\mathcal{T}$. This proves part (1) of the lemma.
%One deduces from  $A=2\alpha /(\alpha ^2-3)$ that 

\begin{equation}\label{firstequa}
\beta =\alpha _0(\alpha _0^2-1)/(\alpha _0^2-3)~,
\end{equation} 
so $B=2\alpha _0(1-\alpha _0)/(\alpha _0^2-3)$ and 

\begin{equation}\label{secondequa}
\gamma =\alpha _0(\alpha _0-1)^2/(\alpha _0^2-3)~.
\end{equation} 
which gives the point $I_2$. To show that the tangent line to $\mathcal{C}_2$ 
at $I_1$ is vertical it suffices to observe that for $\beta =\gamma =\alpha _0$ 
equation (\ref{tangent}) reduces to $d\beta =0$. At $I_2$ the tangent line 
to $\mathcal{C}_2$ is defined by the equation 

$$(2\alpha _0^2/(\alpha _0^2-3))d\gamma +
(\alpha _0(\alpha _0-1)(\alpha _0-3)/(\alpha _0^2-3))d\beta =0~.$$
Its slope equals $-(\alpha _0-1)(\alpha _0-3)/2\alpha _0$. The last statement 
of part (3) follows from the convexity of the hyperbola $\{ c_2=0\}$.   

To prove part (4) one has to recall that the real polynomial 
$x^3+px+q$ is hyperbolic if and only 
if $4p^3+27q^2\leq 0$ (this means, in particular, that $p\leq 0$). As 

$$x^3+\alpha x^2+\beta x+\gamma =(x+\alpha /3)^3+
(\beta -\alpha ^2/3)(x+\alpha /3)+\gamma +2\alpha ^3/27-\alpha \beta /3~,$$
the polynomial $L|_{\alpha =\alpha _0}$ is hyperbolic if and only if 
condition (\ref{conditionhyperbolicity}) holds true. 

Set $\beta \mapsto \alpha _0^2\beta$ and $\gamma \mapsto \alpha _0^3\gamma$ 
in the equation of $\mathcal{H}$ (see (\ref{conditionhyperbolicity})). 
In the new variables 
$(\beta ,\gamma )$ the equation of $\mathcal{H}$ (after division by 
$\alpha _0^3$) coincides with its equation 
for $\alpha _0=1$:

\begin{equation}\label{ch1}
4(\beta -1/3)^3+27(\gamma +2/27-\beta /3)^2=0~.
\end{equation}
One can parametrize this curve by setting $\beta =1/3-3t^2$, 
$\gamma =1/27+2t^3-t^2=2(t-1/3)^2(t+1/6)$. 
It has a cusp for $t=0$, i.e. at $(1/3,1/27)$. 
Its tangent vector equals $(-6t,6t^2-2t)$. For $t<0$ its components are 
both positive and its slope is also positive. For $t\in (0,1/3)$ they are 
both negative and again the slope is positive. One has $\beta >0$ and 
$\gamma >0$ exactly when $t\in (-1/6,1/3)$ (i.e. only for values of 
$t$ for which the slope is positive). The coordinate $\beta$ attains its 
global maximal value $1/3$ only for $t=0$. For $t\in (-1/6,1/3)$ the coordinate 
$\gamma$ attains its maximal value $1/27$ only for $t=0$. 

Prove part (5). The equation of $\mathcal{H}$ with 
$\gamma =\beta -(\alpha _0-1)(\beta -\alpha _0)$ reads:

\begin{equation}\label{Uu}
\begin{array}{ccl}\mathcal{U}(\alpha _0,\beta )&:=&
4\beta ^3+44\beta ^2\alpha _0^2-4\beta \alpha _0^4-108\alpha _0\beta 
+27\alpha _0^2-54\alpha _0^3\\ && 
+108\beta ^2+180\beta \alpha _0^2-
144\alpha _0\beta ^2-64\beta \alpha _0^3+23\alpha _0^4+4\alpha _0^5=0~.
\end{array}
\end{equation}  
One has 

$${\rm Res}(\mathcal{U},\partial \mathcal{U}/\partial \beta ,\beta )=
-64\alpha _0^3(\alpha _0-1)(2\alpha _0^2-7\alpha _0+8)(10\alpha _0^2-
45\alpha -0+27)^3~.$$
The first quadratic factor has no real roots. The roots of the second 
one equal $0.7129573851\ldots <1$ and $u_0:=3.787042615\ldots$. 
For $\alpha _0=u_0$ the cusp point of $\mathcal{H}$ is on $\mathcal{C}_5$. 
For $\alpha _0<u_0$ the curve $\mathcal{H}\cap \mathcal{P}|_{\alpha =\alpha _0}$ 
lies entirely below the line $\mathcal{C}_5$ (this can be deduced from the 
last statement of part (4) of the lemma and from the fact that for 
$\alpha _0>0$ small enough the cusp point $J$ is close to the origin); 
for $\alpha _0>u_0$ it intersects this line at two points. 

\begin{rem}
{\rm Equation (\ref{Uu}) is of degree $3$ w.r.t. $\beta$. On Fig.~\ref{c2c5dd} 
one sees two of the solutions (the points $K_1$ and $K_2$, see the proof 
of Lemma~\ref{LLL}). The third solution is an intersection point of 
$\mathcal{H}$ with $\mathcal{C}_5$, with $\beta <0$ and $\gamma >0$. Such 
an intersection point exists because the $\gamma$-coordinate of a point of 
$\mathcal{C}_5$ grows linearly in $|\beta |$ as $|\beta |$ increases 
($\beta$ being negative) while the $\gamma$-coordinate of a point of 
$\mathcal{H}$ grows as $|\beta |^{3/2}$.}
\end{rem}

%For no value of $\alpha _0>1$ does the point $I_2$ define a hyperbolic 
%polynomial. Indeed, 
To prove the last statement of part (5) we substitute the right-hand sides of 
(\ref{firstequa}) and (\ref{secondequa}) 
for $\beta$ and $\gamma$ 
in (\ref{conditionhyperbolicity}) and we multiply by 
$(\alpha _0^2-3)^3/\alpha _0^2(\alpha _0-1)^2>0$. This yields   
the equivalent condition 

$$3\alpha _0^6-16\alpha _0^5+13\alpha _0^4+24\alpha _0^3-23\alpha _0^2+
104\alpha _0-81\leq 0~.$$ 
However the left-hand side has no 
roots greater than $1$ and the leading coefficient is positive. Hence 
the last inequality fails for $\alpha _0>1$. 

\end{proof}

%NNNNNNNNNNNNNNNNNNNNNNNN

\section{Cases 2 - 6 are not realizable\protect\label{seccases26}}

\subsection{Preliminaries}
The following two lemmas are proved in the Appendix. 
They allow to simplify 
the proof of Theorem~\ref{tmd8} by decreasing the number of parameters.

\begin{lm}\label{basiclemma}
Suppose that there exists a monic degree $8$ polynomial $P$ realizing 
Case $j$, $2\leq j\leq 6$. Then there exists a monic degree $8$ polynomial 
$U$ having a quadruple root at $1$ and no other real roots, and whose 
coefficients define the same sign pattern as the one of Case~$j$.
\end{lm}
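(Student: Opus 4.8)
The plan is to realize $U$ by deforming $P$ within the class of polynomials that share the sign pattern $\sigma_j$ and the root distribution $(4,0)$, gradually forcing the four positive roots to merge into a single quadruple root. First I would factor $P = A(x)B(x)$, where $A$ is the monic degree $4$ polynomial carrying the four distinct positive roots $r_1,r_2,r_3,r_4$ and $B$ is the monic degree $4$ factor whose roots form the two complex conjugate pairs, so that $B(x)>0$ for all real $x$. Since the substitution $x\mapsto \lambda x$ (followed by division by $\lambda^8$ to restore monicity) multiplies the coefficient of $x^k$ by $\lambda^{k-8}$ and hence preserves every sign, I would use it freely to normalize the configuration; in particular the eventual quadruple root can be placed at $1$ by an appropriate final rescaling.

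The main step is the confluence of the positive roots, which I would perform pairwise. Writing the degree $4$ factor as $A=(x^2-sx+q)(x-r_3)(x-r_4)$ with the sum $s=r_1+r_2$ held fixed and the product $q=r_1r_2$ increased from its initial value towards $s^2/4$, the two roots coalesce at $s/2$ when $q=s^2/4$. The decisive feature is that $A$, and therefore $P=AB$, depends affinely on $q$; consequently each coefficient of $P$ is an affine function of $q$ and can vanish at most once as $q$ varies, so along each merge a coefficient keeps its sign precisely when its two endpoint values have the same sign. Iterating — next merging the resulting double root with $r_3$, then with $r_4$, each time isolating a single affinely appearing parameter — drives all four positive roots to a common positive value, which I then send to $1$. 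Throughout, keeping the complex factor $B$ essentially unchanged guarantees that no negative or extra positive root is created and that the two complex pairs persist, so the resulting $U$ automatically has a quadruple root at $1$ and no other real roots.

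The hard part will be ensuring that the sign pattern is exactly $\sigma_j$ at the end, i.e. that no coefficient is driven to zero during the confluence. The affine dependence controls each coefficient individually, but with up to nine coefficients one must prevent any of them from changing sign, and the rigid choice of keeping $B$ fixed need not land on $\sigma_j$. I would therefore allow the complex factor to vary as well — keeping its discriminant negative so that $B(x)>0$ is maintained — and use this extra freedom, together with a suitable ordering of the three pairwise merges and the scaling normalization, to route the deformation around every zero crossing. Establishing that such a sign-preserving route always exists is where the specific positions of the pluses and minuses in each of the patterns $\sigma_2,\dots,\sigma_6$ must enter, and I expect this to reduce to a finite verification carried out pattern by pattern, which is naturally deferred to the Appendix.
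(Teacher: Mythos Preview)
Your outline identifies the right difficulty but does not actually overcome it, so the argument as written has a genuine gap. The whole content of the lemma is the preservation of the sign pattern while forcing the four positive roots together; you acknowledge this is ``the hard part'' and then defer it to an unspecified ``finite verification'' in which one would ``route the deformation around every zero crossing.'' Nothing in the proposal explains why such a route exists. The affine dependence of the coefficients on your parameter $q$ only tells you that each coefficient can vanish at most once along a single merge; it gives no control over whether it \emph{does} vanish before $q$ reaches $s^2/4$. Allowing $B$ to vary adds degrees of freedom, but you would then have to exhibit, for each of the five patterns, an explicit path in a multi–parameter family that avoids every coordinate hyperplane $\{w_j=0\}$, and you give no mechanism for producing or certifying such a path. (A secondary issue: after the first merge you have a double root and a simple root, and merging these into a triple root is no longer governed by a single affinely appearing parameter in any obvious way, so the ``iterate'' step needs more than what you wrote.)

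The paper's proof avoids this obstacle by a different idea: it does not move the roots directly at all. Instead it perturbs $P$ by adding $a x^8 - b x^{k} + c$ with $a,b,c>0$, where $k$ is chosen case by case ($k=1$ in Cases~3,5,6; $k=3$ in Case~2; $k=5$ in Case~4) so that the added monomials hit only coefficients whose sign in $\sigma_j$ already matches the sign being added. Thus the sign pattern is preserved \emph{automatically}, for every $a,b,c>0$, and the problem reduces to showing that for some choice of $(a,b)$ the perturbed polynomial acquires a triple critical point $x_0>0$ (after which $c$ is adjusted to make $x_0$ a quadruple root). This last point is handled by eliminating $a,b$ from the system $(U^0)'=(U^0)''=(U^0)'''=0$ and using Rolle's theorem and Descartes' rule to locate a positive solution $x_0$; a separate monotonicity argument then forces $a>0$, $b>0$, $c>0$. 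The key trick you are missing is precisely this choice of perturbation that respects the sign pattern by construction, so that no sign tracking along a deformation is ever needed.
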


\begin{rem}
{\rm One can observe that roots at $1$ remain invariant under reverting of 
sign patterns.}
\end{rem}

\begin{lm}\label{nextlemma}
(1) Suppose that a monic polynomial $U=(x-1)^4V$ realizes one of 
the sign patterns 

$$\begin{array}{lllllll}
\sigma _2^r&=&(+,+,-,-,-,+,-,+,+)&~~~~~~~~~&  
%\sigma _3^r&=&(+,-,-,+,-,+,-,+,+)\\ 
\sigma _4&=&(+,+,+,-,-,+,-,+,+)\\
&&~~~~~{\rm or}&&\sigma _6^r&=&(+,-,-,-,-,+,-,+,+)~,
\end{array}$$
where $V$ is a real monic polynomial with no real root. 
Then there exists a polynomial of the form  $U_t:=U-t(x-1)^4$, $t\geq 0$, 
defining the same sign pattern and having one or two negative roots 
of even multiplicity, hence a polynomial of the form 

\begin{equation}\label{polyW}
W:=(x-1)^4(x^2+Sx+S^2/4)(x^2+Mx+N)~,~~{\rm where}~~S>0~~{\rm and}~~
N\geq M^2/4~.
\end{equation} 

(2) If the polynomial $U$ realizes the sign pattern 
$\sigma _3^r=(+,-,-,+,-,+,-,+,+)$, then in the family of polynomials 
$U^*_t:=U+tx(x-1)^4$, $t>0$, there exists a polynomial 
defining the sign pattern $\sigma _3^r$ and of the form (\ref{polyW}).

(3) If the polynomial $U$ realizes the sign pattern 
$\sigma _5^r=(+,-,-,+,-,+,+,+,+)$, then in the family of polynomials 
$U^*_t:=U+tx(x-1)^4$, $t>0$, there exists a polynomial defining 
one of the sign patterns 
$\sigma _3^r$, $\sigma _5^r$ or $\sigma ^*:=(+,-,-,+,-,+,0,+,+)$ 
and of the form (\ref{polyW}).
\end{lm}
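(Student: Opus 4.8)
The plan is to handle all three parts by the same device of deforming only the real-root-free factor $V$, since $U_t = U - t(x-1)^4 = (x-1)^4(V-t)$ in part (1) and $U^*_t = U + tx(x-1)^4 = (x-1)^4(V+tx)$ in parts (2)--(3); in each case the quadruple root at $1$ is preserved and the modified factor is again a monic quartic. The first step is pure bookkeeping. Since $(x-1)^4 = x^4 - 4x^3 + 6x^2 - 4x + 1$, subtracting $t(x-1)^4$ changes the coefficients of $x^4,\dots,x^0$ by $-t, +4t, -6t, +4t, -t$. For each of $\sigma_2^r$, $\sigma_4$, $\sigma_6^r$ the signs of these coefficients are $-,+,-,+,+$, so the first four shifts drive the coefficients further in the direction of their existing signs, and the only coefficient that can change sign is the constant term $c_0 = V(0) - t$, which stays positive precisely for $t < V(0)$. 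Likewise $x(x-1)^4 = x^5 - 4x^4 + 6x^3 - 4x^2 + x$ shifts the coefficients of $x^5,\dots,x^1$ by $+t, -4t, +6t, -4t, +t$; for $\sigma_3^r$ every shift reinforces the existing sign, so the pattern is preserved for all $t>0$, whereas for $\sigma_5^r$ the only sign that can change is that of the coefficient $c_2$ of $x^2$, which is positive and decreases, producing $\sigma_5^r$, then $\sigma^*$, then $\sigma_3^r$ as $t$ crosses the threshold $c_2/4$.

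For part (1) I would then take $t^\ast = \min_x V(x)$, the first value at which $V-t$ acquires a real root, attained at a global minimizer $x_0$; there $V - t^\ast \geq 0$ vanishes only at $x_0$ (or at two global minimizers). Once $x_0 < 0$ is established, $U_{t^\ast}$ has no real roots other than the quadruple root at $1$ and the negative double root(s), i.e. is of the form (\ref{polyW}). To obtain $x_0 < 0$, note that $U'(0) = V'(0) - 4V(0)$ is the coefficient of $x$ in $U$, which is positive in all three patterns, whence $V'(0) = U'(0) + 4V(0) > 0$; thus $0$ is not a critical point of $V$, and $V$ has a local minimum at some negative point with value below $V(0)$. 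It remains to exclude a global minimizer at a positive point $x_1$: there $V(x_1) < V(0)$, so putting $t = V(x_1)$ keeps the sign pattern (as $t < V(0)$) while giving $U_t$ a double root at $x_1 > 0$ on top of the quadruple root at $1$, i.e. six positive roots, contradicting Descartes' rule since each of the three patterns has exactly four sign changes. Hence $x_0 < 0$ and $0 < t^\ast < V(0)$, and $U_{t^\ast}$ has the required form and sign pattern.

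For parts (2) and (3) the negativity of the double root is immediate and no Descartes argument is needed. Here $V + tx$ is a monic quartic with fixed positive constant term $V(0)$; for small $t>0$ it is positive everywhere, but for any fixed $x_1 < 0$ one has $V(x_1) + tx_1 \to -\infty$ as $t \to +\infty$, so $\min_x(V+tx)$ eventually becomes negative. Letting $t^\ast$ be the first value at which this minimum equals $0$, the polynomial $V + t^\ast x$ is nonnegative and vanishes at its minimizer $x_0$ to even order; from $V(x_0) + t^\ast x_0 = 0$ together with $V(x_0) > 0$ and $t^\ast > 0$ we get $x_0 = -V(x_0)/t^\ast < 0$. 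Thus $U^*_{t^\ast} = (x-1)^4 (x-x_0)^2 Q(x)$ with $Q \geq 0$, which is the form (\ref{polyW}) with $S = -2x_0 > 0$. For $\sigma_3^r$ the sign pattern is unchanged throughout, while for $\sigma_5^r$ the resulting pattern is $\sigma_5^r$, $\sigma^*$ or $\sigma_3^r$ according as $t^\ast$ is less than, equal to, or greater than $c_2/4$, which accounts for the three alternatives in the statement.

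The main obstacle is concentrated in part (1). In parts (2)--(3) the location of the emerging double root is pinned down algebraically by the vanishing of $V + t^\ast x$ at a point where $V > 0$, forcing it onto the negative half-axis; the constant shift $V - t$ of part (1) provides no such identity, and one must instead combine the inequality $V'(0) > 0$ read off from the sign pattern with the Descartes bound on the number of sign changes to guarantee that the global minimum of $V$ lies on the negative half-axis rather than the positive one. A secondary point to verify carefully is that at $t^\ast$ the factor $V - t^\ast$ (respectively $V + t^\ast x$) is globally nonnegative, so that no extraneous simple real roots appear and the polynomial is exactly of the form (\ref{polyW}).
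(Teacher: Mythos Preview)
Your proof is correct and follows essentially the same approach as the paper's: both exploit the factorization $U_t=(x-1)^4(V-t)$ (respectively $U^*_t=(x-1)^4(V+tx)$), track which coefficients can change sign, use the positivity of the $x$-coefficient of $U$ to control behaviour near $x=0$, and invoke Descartes' rule in part~(1) to exclude extra positive roots. Your organization is slightly more direct than the paper's---you go straight to $t^\ast=\min V$ and show all global minimizers are negative, whereas the paper argues by a dichotomy (either a negative double root appears first, or $U_t(0)=0$ occurs first, in which case the simple root at~$0$ forces an odd number of negative roots to have appeared already)---but the ingredients are the same.
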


In what follows we set $W:=\sum _{j=0}^8w_jx^j$, $w_8=1$, and  

$$Q:=3S^2/2-4S+1~~,~~R:=S^2-6S+4~~{\rm and}~~P:=S^2/4-4S+6~.$$
The roots of these three polynomials are real. We denote them by 

$$\begin{array}{ccccccccccc}
0.27\ldots &=&(4-\sqrt{10})/3&=&q_1&<&q_2&=&(4+\sqrt{10})/3&=&
2.38\ldots \\ 
0.76\ldots &=&3-\sqrt{5}&=&r_1&<&r_2&=&3+\sqrt{5}&=&
5.23\ldots \\ 
1.67\ldots &=&8-\sqrt{40}&=&p_1&<&p_2&=&8+\sqrt{40}&=&
14.32\ldots \end{array}$$
%0.2792407799  2.38742588 0.7639320225 0.7639320225 1.675444680 14.32455532
The coefficients 
$w_j$, $j=0$, $\ldots$, $7$ are expressed by the following formulae:
%(to which we refer further in the text as to ``formula $(w_j)$''):

\begin{equation}\label{wj}\begin{array}{ccccccc}
w_0&=&S^2N/4&&w_1&=&(S/4)(MS+4N(1-S))\\
w_2&=&QN+(S-S^2)M+S^2/4&&w_3&=&QM-RN+S-S^2\\
w_4&=&Q-RM+PN&&w_5&=&-R+PM+(S-4)N\\ 
w_6&=&P+(S-4)M+N&&w_7&=&M+S-4
\end{array}
\end{equation}

\subsection{Cases 2 and 4}

In Cases 2 and 4 we are using the sign patterns  
$\sigma _2^r=(+,+,-,-,-,+,-,+,+)$ 
and $\sigma _4=(+,+,+,-,-,+,-,+,+)$. They can be united 
in a single sign pattern $\pi _{\pm}:=(+,+,\pm ,-,-,+,-,+,+)$. 
If the polynomial $W$ (see (\ref{polyW}) defines 
the sign pattern $\pi _{\pm}$, then one must have $w_j>0$ for $j=0$, $1$, $3$ 
and $7$ and $w_j<0$ for $j=2$, $4$ and $5$.

One has $M>0$. Indeed, $w_7=M+S-4>0$, hence $S>4-M$. Suppose that $M\leq 0$. 
%As $w_1=NS+MS^2/4-NS^2=(S/4)(MS+4N(1-S))$, 
Then one has $S>4$, $MS\leq 0$ 
and $4N(1-S)\leq 0$, i.e. $w_1\leq 0$ -- a contradiction. 

%One has $w_4=Q(S)-R(S)M+P(S)N$.

Suppose that $S>1$. Then the condition $w_1>0$ is equivalent to 
$N<MS/4(S-1)$. On the other hand, as $N\geq M^2/4$, the last two inequalities 
together imply $M<S/(S-1)$ hence $N<S^2/u$, where $u=4(S-1)^2$. 

For $S\in [p_1,p_2]$ (recall that $p_1>1$) one has 
%$S\in I_2\cup \{ 8-\sqrt{40}\}$ one has 
$P\leq 0$, $PN\geq PS^2/u$ and $4-S<M<S/(S-1)$. 
Therefore 

$$w_4\geq \min 
(~Q(S)-R(S)(4-S)+P(S)S^2/u~,~Q(S)-R(S)S/(S-1)+P(S)S^2/u~)~.$$
This minimum is $>5$ hence $>0$ (the numerical check of this is easy) 
and the inequality $w_4<0$ 
fails for $S\in [p_1,p_2]$.

For $S>p_2$  
%$S\in I_3\cup \{ 8+\sqrt{40}\}$ 
one has $P\geq 0$, $PN\geq PM^2/4\geq 0$ and $0<M<S/(S-1)$, so 

$$w_4\geq \min 
(~Q(S)-R(S)S/(S-1)~,~Q(S)~)~.$$
This minimum is also positive and again $w_4<0$ fails. 

Let now $S\in (0,p_1)$.  
%$S\in I_1$. 
The inequality $w_4<0$ can be rewritten as 
$N<(RM-Q)/P$ which together with $M^2/4\leq N$ implies 
$PM^2-4RM+4Q<0$. This is a quadratic inequality w.r.t. $M$, with $P>0$. 
The discriminant 
of the quadratic polynomial $Y(M,S):=P(S)M^2-4R(S)M+4Q(S)$ equals 
$4(R^2(S)-P(S)Q(S))$. 
It is positive for all 
$S\in (0,p_1)$ (this is easy to check). Hence for $S\in (0,p_1)$ the polynomial 
$Y$ has two real roots $M'<M''$ which depend continuously on $S$ and 
one must have $M\in (M',M'')$. 

For each $S\in (0,p_1)$ fixed both these roots are smaller than $4-S$. Indeed, 
set $M:=4-S$. The polynomial $Y(4-S,S)$ is positive on $(0,p_1)$ 
(easy to check). For $S=1\in (0,p_1)$ one has $Q=-3/2<0$, i.e. 
one of the roots is 
negative and the other is positive. Hence for $S\in (0,p_1)$ the number 
$4-S$ lies 
outside the interval $[M',M'']$, and as $4-S>0$, one has $M'<4-S$,  
$M''<4-S$ and $M\in (M',M'')$. But one must have $M>4-S$, 
so the inequalities $w_7>0$ 
and $w_4<0$ cannot hold simultaneously for $S\in (0,p_1)$.

\subsection{Cases 3, 5 and 6}

In Cases 3, 5 and 6 we use the sign patterns 

$$\begin{array}{lllllll}\sigma _3^r&=&(+,-,-,+,-,+,-,+,+)&~~~~,& 
\sigma _5^r&=&(+,-,-,+,-,+,+,+,+)\\ &&{\rm and}&&   
\sigma _6^r&=&(+,-,-,-,-,+,-,+,+)~.\end{array}$$
and formulae (\ref{wj}). The proof of Theorem~\ref{tmnew} in these cases 
results from Lemmas~\ref{not356r1}, \ref{notp14} and~\ref{notr1p1}. 

\begin{lm}\label{M>0}
In Cases 3, 5 and 6 one has $M>0$.
\end{lm}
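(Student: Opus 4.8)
The plan is to argue by contradiction: I assume $M\leq 0$ and derive a contradiction with the sign constraints imposed by each of the three patterns $\sigma_3^r$, $\sigma_5^r$ and $\sigma_6^r$. The first step is to extract from these patterns the sign conditions on the coefficients $w_j$ that are common to all three cases and that involve $M$, $N$ and $S$ in a usable way. Reading the patterns off position by position (recall that the $k$-th entry of a pattern is the sign of $w_{8-k}$), one sees that in all three cases $w_1>0$ (from the entry $+$ in position $7$) and $w_6<0$ (from the entry $-$ in position $2$). These are the two inequalities I would exploit, using the explicit formulae
$$w_1=(S/4)(MS+4N(1-S)),\qquad w_6=P+(S-4)M+N$$
from (\ref{wj}), together with $S>0$ and $N\geq M^2/4\geq 0$ from (\ref{polyW}).

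The key observation is that the natural case split is at $S=1$, because the sign of the term $4N(1-S)$ in $w_1$ changes there. First I would treat $S\geq 1$ using $w_1$: if $M\leq 0$ then $MS\leq 0$, and since $N\geq 0$ and $1-S\leq 0$ one has $4N(1-S)\leq 0$; hence $MS+4N(1-S)\leq 0$, so $w_1\leq 0$, contradicting $w_1>0$. Then I would treat $0<S<1$ using $w_6$: here $S<4$, so $(S-4)M\geq 0$ when $M\leq 0$; moreover $N\geq 0$, and $P(S)=S^2/4-4S+6$ is positive on $(0,1)$ (it is decreasing there, with $P(1)=9/4>0$), whence $w_6=P+(S-4)M+N\geq P>0$, contradicting $w_6<0$. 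Since the two regimes $S\geq 1$ and $0<S<1$ exhaust all admissible values of $S$, the assumption $M\leq 0$ is untenable, and therefore $M>0$.

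I do not expect a serious obstacle, since each regime reduces to a one-line sign count once the right coefficient is singled out; the only point requiring care is the choice of the split point $S=1$. It is tempting to try to push the $w_6$-argument onto the whole interval $(0,p_1)$ where $P>0$ (with $p_1=8-\sqrt{40}=1.67\ldots$), but this is unnecessary: the $w_1$-argument already disposes of the entire range $S\geq 1$, covering both the part $[1,p_1)$ where $P$ is still positive and the part $[p_1,\infty)$ where $P\leq 0$ and the $w_6$-argument would fail. Thus the two complementary arguments cover all $S>0$ without any gap, and no estimate on $N$ beyond $N\geq 0$ is needed.
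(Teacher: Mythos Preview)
Your proof is correct and follows essentially the same route as the paper: split at $S=1$, use $w_1>0$ for $S\geq 1$ and $w_6<0$ for $S\in(0,1)$, exploiting respectively the nonpositivity of $4N(1-S)$ and the positivity of $P(S)$. The only cosmetic difference is that you argue by contradiction (assuming $M\leq 0$) whereas the paper phrases each step in its direct form; the content is identical.
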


\begin{proof}
One  must have $w_1>0$ and $w_6<0$. 
For $S\geq 1$ the product $N(1-S)$ is negative (see formulae 
(\ref{wj})), so for $S\geq 1$ the condition $w_1>0$ implies that one must have 
$SM>0$, i.e. $M>0$. Consider for $S\in (0,1)$ the condition $w_6<0$, (i.e.  
$P+(S-4)M+N<0$).    
One has $P(S)>0$, $N\geq 0$ and $S-4<0$, so the inequality $w_6<0$ is possible 
only for $M>0$. 
\end{proof}

\begin{lm}\label{not356r1}
Cases 3, 5 and 6 are not realizable for $S\in (0,r_1]$.
\end{lm}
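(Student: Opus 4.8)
The plan is to isolate the sign conditions that are common to all three patterns and to reach a contradiction using those alone, so that Cases~3, 5 and 6 are disposed of simultaneously. Writing the sign pattern entries via the correspondence $\sigma _k\leftrightarrow w_{8-k}$, one checks that each of $\sigma _3^r$, $\sigma _5^r$ and $\sigma _6^r$ requires $w_7<0$, $w_6<0$, $w_4<0$ and $w_3>0$; the coefficients in which the three patterns disagree are $w_5$ and $w_2$, and my intention is not to use them. By Lemma~\ref{M>0} one has $M>0$, and by hypothesis (see (\ref{polyW})) $N\geq M^2/4$, so $N>0$; consequently $w_0=S^2N/4>0$ and $w_1=(S/4)(MS+4N(1-S))>0$ hold automatically on $(0,r_1]$, because $r_1<1$ gives $1-S>0$. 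On $(0,r_1]$ one also has $R\geq 0$ (with $R=0$ only at $S=r_1$), $P>0$ and $S-S^2>0$; these sign facts drive the whole argument.

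First I would convert the two surviving inequalities $w_3>0$ and $w_6<0$ into bounds on $M$, using $N\geq M^2/4$ as a lower bound for $N$. Substituting $N\geq M^2/4$ into $w_6=N+(S-4)M+P<0$ gives the quadratic inequality $Z(M,S):=M^2-4(4-S)M+4P<0$; since $Z(0,S)=4P>0$ and $Z$ opens upwards, this forces $M$ to exceed the smaller root $M_-(S)>0$ of $Z$. Substituting $N\geq M^2/4$ into $w_3=QM-RN+S-S^2>0$ and using $R\geq 0$ gives $X(M,S):=RM^2-4QM+4(S^2-S)<0$; since $X(0,S)=4(S^2-S)<0$ for $S\in(0,1)$, the smaller root of $X$ is negative, so the inequality forces $M$ to stay below the positive root $\mu _2(S)$. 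At the endpoint $S=r_1$ the coefficient $R$ vanishes, $X$ degenerates to the linear condition $QM+S-S^2>0$, and because $Q<0$ there this again reads $M<\mu _2$ with $\mu _2=(S^2-S)/Q$. Thus any realizing polynomial would require $M_-(S)<M<\mu _2(S)$, and note that $w_7<0$ (giving $M<4-S$) and $w_4<0$ are then available as extra, but unused, constraints.

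It therefore remains to prove the single inequality $\mu _2(S)\leq M_-(S)$ for every $S\in(0,r_1]$, i.e. that the negativity intervals of $X$ and of $Z$ are disjoint on $\{M>0\}$. This comparison of the roots of two $S$-dependent quadratics is the heart of the matter, and the step I expect to be the main obstacle. I would establish it by showing that $Z(M,S)\geq 0$ on $0<M\leq \mu _2$ (equivalently $Z(\mu _2,S)\geq 0$, using that $Z$ is decreasing up to its vertex, which lies far to the right of $\mu _2$), or, more mechanically, by eliminating $M$ through the resultant of $X$ and $Z$ and verifying that the resulting one-variable polynomial in $S$ has no zero in $(0,r_1]$ while the desired inequality holds at one interior test value. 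The endpoints already indicate the expected behaviour: as $S\to 0^+$ one finds $\mu _2\to 1$ while $M_-\to 8-\sqrt{40}=p_1\approx 1.67$, and at $S=r_1$ one has $\mu _2\approx 0.15$ well below $M_-\approx 1.04$.

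Once $\mu _2\leq M_-$ is confirmed on $(0,r_1]$, the required condition $M_-<M<\mu _2$ is impossible, so no admissible pair $(M,N)$ with $N\geq M^2/4$ can satisfy $w_3>0$ and $w_6<0$ together. Since only these two common inequalities were used, the conclusion applies at once to $\sigma _3^r$, $\sigma _5^r$ and $\sigma _6^r$, proving that Cases~3, 5 and 6 are not realizable for $S\in(0,r_1]$.
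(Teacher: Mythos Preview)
Your route differs from the paper's in the second inequality paired with $w_3>0$: the paper uses the linear constraint $w_7<0$ (i.e.\ $M<4-S$), whereas you use $w_6<0$, which after substituting $N\ge M^2/4$ becomes a second quadratic in $M$. The paper's choice looks simpler, since it reduces the whole lemma to showing that $L(S,M)=R(S)M^2/4-Q(S)M+S^2-S$ is nonnegative on $\{0<S\le r_1,\ 0\le M<4-S\}$; the paper argues this from the tangency $L(S,4-S)=(S-2)^2((S-2)^2+8)/4\ge 0$ together with one interior test value. But that justification overlooks the boundary $M=0$: one has $L(S,0)=S^2-S<0$ for every $S\in(0,1)$, and for instance $L(0.1,\,0.5)\approx -0.184$ with $0.1\le r_1$ and $0.5<3.9$. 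So the pair $(w_3,w_7)$ (with $N\ge M^2/4$) does \emph{not} by itself yield a contradiction on $(0,r_1]$, and a genuine lower bound on $M$---exactly what your use of $w_6<0$ supplies via $M>M_-(S)$---is needed. In that sense your choice of auxiliary inequality is the correct one.

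What remains incomplete in your proposal is the decisive step $\mu_2(S)\le M_-(S)$ on $(0,r_1]$. Your endpoint checks are encouraging and your resultant strategy is sound, but nothing is actually computed. A clean way to finish is to show $X(M_-(S),S)\ge 0$ on $(0,r_1]$: since $X(0,S)<0$ and $X$ opens upward (respectively is linear with positive slope at $S=r_1$), this is equivalent to $M_-\ge\mu_2$. Substituting $M_-=2(4-S)-2\sqrt{(4-S)^2-P(S)}$ into $X$, isolating the square root and squaring reduces the claim to a single polynomial inequality in $S$ that can be checked by real-root isolation; alternatively compute ${\rm Res}_M(X,Z)$ and verify it has no zero in $(0,r_1]$. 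Until one of these computations is carried out, the argument is a correct plan rather than a complete proof.
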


\begin{proof}
In Cases 3, 5 and 6 one has $w_3>0$, i.e. $QM+S-S^2>RN$, see (\ref{wj}). 
For $S\in (0,r_1]$ one has $R(S)\geq 0$ and 
$QM+S-S^2>RN\geq RM^2/4$, hence 

\begin{equation}\label{aster}
L(S,M):=R(S)M^2/4-Q(S)M-S+S^2<0~.
\end{equation} 
The inequalities 
(\ref{aster}), $0<S\leq r_1$ and $0\leq M<4-S$ have no common solution. Indeed, 
$L(S,4-S)=(S-2)^2((S-2)^2+8)/4$. This means that for $S=2$ 
the line $M+S=4$ has an 
ordinary tangency with the curve $L(S,M)=0$, and this is 
their only common point in the domain $\{ S>0,M>0\}$. For $S=M=1/2$ one has 
$L(S,M)=9/64>0$ and $S+M-4<0$. Hence 
below the line $M+S=4$ in the domain $\{ S>0,M>0\}$ one has $L(S,M)>0$.
\end{proof}

\begin{rem}
{\rm (1) The inequalities $S>0$, $M>0$ (see Lemma~\ref{M>0}) and 
$S+M<4$ (this follows from $w_7<0$ in Cases 3, 5 and 6) imply $S<4$.}
\end{rem}

\begin{convention}\label{conv1}
{\rm (1) In what follows we interpret an equality of the form $w_j=0$ 
(see (\ref{wj})) as the equation of a straight line (denoted by $\ell _j$) 
in the space $(M,N)$ 
with coefficients depending on $S$ as on a parameter. Most often 
we need equations  
of the form $A(S)N+B(S)M+C(S)=0$, and we care to have a positive coefficient 
of $N$. E.g. we prefer the equation of the line $\ell _1$ 
(see the quantity $w_1$ in 
formulae (\ref{wj})) to be of the form $4(1-S)N+SM=0$ for $S<1$ and 
$4(S-1)N-SM=0$ for $S>1$. 

(2) We denote by $\ell _j^+$ (resp. $\ell _j^-$) the 
upper (resp. lower) half-plane defined by the line $\ell _j$. In the case 
of $\ell _1$ one has $\ell _1^+:4(1-S)N+SM>0$ for $S<1$ and 
$\ell _1^+:4(S-1)N-SM>0$ for $S>1$. For $S=1$ this line is vertical 
and we do not define 
the half-planes $\ell _1^{\pm}$. By $s(\ell _j)$ we denote the {\em slope} of 
the line $\ell _j$, i.e. the quantity $-B(S)/A(S)$ for $A(S)\neq 0$. 
For $\ell _1$ it equals $S/4(S-1)$.  

(3) When in the proofs of the lemmas rational functions appear, it is 
presumed that the factors of degree $2$ have no real roots (so 
their sign coincides with the one of their leading coefficient). Factorizations 
are performed by means of MAPLE.}
\end{convention} 

\begin{lm}\label{notp14}
Cases 3, 5 and 6 are not realizable for $S\in [p_1,4)$.
\end{lm}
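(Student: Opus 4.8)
The plan is to show that the inequalities forced by the sign patterns $\sigma_3^r$, $\sigma_5^r$, $\sigma_6^r$ together with the structural constraint $N\geq M^2/4$ from~(\ref{polyW}) have no common solution when $S\in[p_1,4)$. First I would write down explicitly which coefficients $w_j$ must be positive and which negative in each of the three cases, extract the relevant sign conditions from~(\ref{wj}), and isolate the ones that become tractable on the interval $[p_1,4)$. The key fact I expect to exploit is that $p_1=8-\sqrt{40}$ is exactly the larger-than-$1$ threshold at which $P(S)=S^2/4-4S+6$ changes sign, so for $S\geq p_1$ one has $P(S)\geq 0$; this is the same device used in the treatment of Cases 2 and 4 above, where the sign of $P$ on $[p_1,p_2]$ versus $(p_2,\infty)$ controlled the estimate on $w_4$.

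The main step is to combine a coefficient inequality that is \emph{linear} in $N$ with the quadratic bound $N\geq M^2/4$ to eliminate $N$ and obtain a single quadratic inequality in $M$ (with $S$ as parameter), exactly as was done for $w_4<0$ in the range $S\in(0,p_1)$ in Cases 2 and 4. Concretely, in Cases 3, 5 and 6 one has $w_6<0$, which reads $P+(S-4)M+N<0$, i.e. $N<(4-S)M-P$; since $P(S)\geq 0$ on $[p_1,4)$ and, by Lemma~\ref{M>0}, $M>0$, substituting $N\geq M^2/4$ yields $M^2/4+(S-4)M+P<0$, a quadratic in $M$ whose discriminant and roots I would compute in closed form. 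Using the language of Convention~\ref{conv1}, this says the admissible region must lie in the half-plane $\ell_6^-$ and simultaneously above the parabola $N=M^2/4$, and I would check that these two regions are disjoint on $[p_1,4)$ after also imposing $0<M<4-S$ (from $w_7<0$, which forces $S+M<4$).

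The hard part will be that a single coefficient inequality is unlikely to close the argument on the whole interval, so I expect to need to patch together two or three of the $w_j$-conditions whose signs differ among the three cases (for instance $w_6$ has sign $-$ in all three but $w_4$, $w_5$ differ), and to verify that the resulting quadratic-in-$M$ inequality genuinely has empty intersection with $(0,4-S)$ for every $S\in[p_1,4)$. I would handle this by reducing each claim to a polynomial inequality in $S$ alone, then certifying that the relevant one-variable polynomial has no roots in $[p_1,4)$ and is of a definite sign there — precisely the ``numerical check is easy'' and MAPLE-factorization strategy sanctioned by Convention~\ref{conv1}(3). The remaining case $S\in(r_1,p_1)$ is left to Lemma~\ref{notr1p1}, so here I only need the endpoint behaviour at $S=p_1$ to match up, which follows automatically since $P(p_1)=0$ makes the quadratic in $M$ degenerate to $M^2/4+(p_1-4)M<0$, forcing $M<4-p_1$ and $M>0$, a window I would show is excluded by one of the remaining inequalities.
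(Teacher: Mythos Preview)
Your plan contains a sign error that undoes the main step. You write that ``for $S\geq p_1$ one has $P(S)\geq 0$''; in fact $p_1$ and $p_2$ are the roots of the upward-opening quadratic $P(S)=S^2/4-4S+6$, so $P(S)\leq 0$ on $[p_1,p_2]\supset[p_1,4)$, with equality only at $S=p_1$ (this is exactly what the paper uses in Cases~2 and~4: ``For $S\in[p_1,p_2]$ \ldots\ one has $P\leq 0$''). With the correct sign the combination you propose, namely $w_6<0$ together with $N\geq M^2/4$, yields $M^2/4+(S-4)M+P(S)<0$, whose solution set in $M$ (for $P\leq 0$) is an interval containing all of $(0,4(4-S))$; in particular it contains the whole window $(0,4-S)$ coming from $w_7<0$. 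So $w_6$ and the parabola constraint give no obstruction whatsoever on $[p_1,4)$, and your endpoint computation at $S=p_1$ is also off (the degenerate quadratic gives $0<M<4(4-p_1)$, not $M<4-p_1$). The remaining paragraphs only promise to ``patch together two or three of the $w_j$-conditions'' without saying which; that is not yet a proof.

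The paper's argument is different in both ingredients and geometry. It never uses $w_6$ or the parabola $N=M^2/4$ for this range; instead it works entirely with the \emph{linear} conditions $M>0$, $w_1>0$, $w_3>0$, $w_4<0$ in the $(M,N)$-plane. On $[p_1,4)$ one has $R(S)<0$ and $P(S)\leq 0$, so $\ell_3$ and $\ell_4$ (written with positive $N$-coefficient) bound the region $\{w_3>0,\,w_4<0\}$ from below; the paper locates the corner $B=\ell_3\cap\ell_4$, checks that $B$ lies strictly above $\ell_1$, and compares slopes to conclude that $\{M>0,\,w_3>0,\,w_4<0\}$ lies entirely in $\ell_1^+$, hence is disjoint from $\{w_1>0\}=\ell_1^-$. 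The key conceptual point you are missing is that $w_1>0$ (not $w_6<0$) supplies the crucial \emph{upper} bound on $N$ here: for $S>1$ it reads $N<SM/4(S-1)$, and it is this line, not the parabola, against which the sector $\{w_3>0,\,w_4<0\}$ is tested.
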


\begin{proof}
Consider the four conditions $M>0$, $w_1>0$, $w_3>0$ and $w_4<0$. 
The second of them defines the half-plane $\ell _1^-$ 
(recall that $\ell _1:4(S-1)N-SM=0$). The last two of them 
read

$$(-R(S))N+Q(S)M+S-S^2>0\hspace{6mm}{\rm and}\hspace{6mm}
(-P(S))N+R(S)M-Q(S)>0~.$$
The straight line $\ell _3:(-R(S))N+Q(S)M+S-S^2=0$ intersects the $N$-axis 
at the point $A:=(0,N_A)$ with $N_A:=S(S-1)/(-R(S))>0$. The lines 
$\ell _3$ and $\ell _4:(-P(S))N+R(S)M-Q(S)=0$ intersect at the point $B$ 
with coordinates 
$$\begin{array}{lllll}
M_B&:=&(2/5)(5S^4-35S^3+84S^2-64S+16)/K(S)&,&\\ 
N_B&:=&(2/5)(5S^4-20S^3+36S^2-16S+4)/K(S)&,&{\rm where}\\
K(S)&:=&S^4-8S^3+30S^2-32S+16&.\end{array}$$
%(the denominator $K$ has no real roots). 
and both numerators and the denominator $K$ have no real roots.
This point lies above the straight line $\ell _1$. Indeed, the coefficient 
of $N$ in the equation of $\ell _1$ is positive. Substituting 
$(M_B,N_B)$ for $(M,N)$ in the left-hand side of this equation  
yields the expression 

$$\mu :=\frac{6(S^2-2.5\ldots S+3.8\ldots )(S^2-0.5\ldots S+0.2\ldots )
(S-1.2\ldots )}{(S^2-6.6\ldots S+20.4\ldots )(S^2-1.3\ldots S+0.7\ldots )}~$$
which is positive, see Convention~\ref{conv1}.
%All quadratic factors being without real roots, one has $\mu >0$. 

For the slopes $s(\ell _4)$ and $s(\ell _1)$ one has  
$s(\ell _4)>s(\ell _1)>0$. 
The first inequality follows from $R(S)/P(S)-S/4(S-1)>0$ which is equivalent to 

$$\frac{15(S^2-1.7\ldots S+0.9\ldots )(S-4.6\ldots )}{4P(S)(S-1)}>0$$
and this results from $S-4.6\ldots <0$, $S-1>0$ and $P(S)<0$. 

Hence the set defined by the conditions $M>0$, $w_3>0$ and $w_4<0$ is the 
domain of $\mathbb{R}^2\simeq (M,N)$ to the right of the $N$-axis, to the 
above of the segment $AB$ and to the above of the half-line starting at $B$,  
which is part of the line $\ell _4$ and which goes to the right and upward. 
This domain does not intersect the half-plane $\ell _1^-$ and the four 
conditions $M>0$, $w_1>0$, $w_3>0$ and $w_4<0$ cannot hold true simultaneously.
\end{proof}

%PPPPPPPPPPPPPPPPPPPPPP

\begin{lm}\label{notr1p1}
Cases 3, 5 and 6 are not realizable for $S\in (r_1,p_1)$.
\end{lm}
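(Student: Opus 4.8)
The plan is to argue exactly as in Lemma~\ref{notp14}: interpret each required sign condition as a half-plane in the $(M,N)$-plane (Convention~\ref{conv1}), and show that these half-planes, together with $M>0$ (Lemma~\ref{M>0}) and $N\ge M^2/4$, have empty common intersection for every $S\in(r_1,p_1)$. I would use only the conditions that hold in all three cases, namely $w_1>0$, $w_3>0$, $w_4<0$, $w_6<0$ and $w_7<0$, so that a single argument settles Cases~3, 5 and~6 simultaneously. The arithmetic of this range is governed by the listed roots of $Q$, $R$, $P$: since $(r_1,p_1)$ lies inside $(q_1,q_2)$ and $(r_1,r_2)$ but to the left of $p_1$, one has $Q(S)<0$, $R(S)<0$ and $P(S)>0$. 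The decisive new feature relative to Lemma~\ref{notp14} is that $P>0$, so $s(\ell_4)=R/P<0$ (it was nonnegative there), and that the interval straddles $S=1$, where the line $\ell_1$ degenerates and $w_1>0$ changes character. I would therefore split $(r_1,p_1)$ at $S=1$.

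For $S\in(1,p_1)$ the picture is closest to Lemma~\ref{notp14}. Here $w_1>0$ is the half-plane $\ell_1^-$ below the line $\ell_1:4(S-1)N-SM=0$ through the origin, of positive slope $S/4(S-1)$; the line $\ell_3$ meets the $N$-axis at $N_A=S(S-1)/(-R)>0$ and $\ell_4$ at $-Q/P>0$. Because $s(\ell_4)<0$, the region $\{M>0,\,w_3>0,\,w_4<0\}$ is now a \emph{bounded} triangle $T$ whose third side lies on the $N$-axis (between heights $N_A$ and $-Q/P$, both positive) and whose apex is the intersection point $B=\ell_3\cap\ell_4$. Since $T$ is convex, $T\subset\ell_1^+$ as soon as all three vertices lie in $\ell_1^+$; the two $N$-axis vertices do so because $\ell_1$ passes through the origin, and the only nontrivial check is that the apex $B$ lies above $\ell_1$. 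This reduces to a single rational-function positivity $\mu>0$, to be obtained and factored in MAPLE exactly as in the proof of Lemma~\ref{notp14}. Once $T\subset\ell_1^+$, the condition $w_1>0$ fails on $T$, giving the desired contradiction.

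For $S\in(r_1,1]$ the condition $w_1>0$ becomes automatic (both summands $MS$ and $4N(1-S)$ of $w_1$ are nonnegative when $S\le 1$, $M>0$, $N\ge0$), so $\ell_1$ is useless and I would bring in $w_6<0$, i.e.\ $N<(4-S)M-P$, the half-plane below $\ell_6$ (positive slope $4-S$, negative intercept $-P$). Now $\ell_3$ has \emph{negative} intercept $N_A<0$, so the triangle cut out by $w_3>0$ and $w_4<0$ dips below the $M$-axis, and the additional constraints $w_6<0$ and $N\ge M^2/4$ must be combined to close off the region: eliminating $N$ between $N\ge M^2/4$ and $w_6<0$ confines $M$ to the interval between the two positive roots of $M^2-4(4-S)M+4P$. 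The goal is to show that on this $M$-interval the upper envelope $\min\!\big((RM-Q)/P,\,(4-S)M-P\big)$ never exceeds the lower envelope $\max\!\big((QM+S-S^2)/R,\,M^2/4\big)$. I expect this to be the main obstacle: it is a genuinely two-dimensional emptiness statement bounded by four lines and a parabola, with no single clean inequality available. I would locate the finitely many critical values of $S$ in $(r_1,1)$ at which the relevant boundary curves become tangent or cross—each a real root of an explicit resultant or discriminant, factored by MAPLE in the spirit of the numbers $u_0$ and $3\sqrt2$ produced in Lemma~\ref{severalsets}—subdivide $(r_1,1)$ at these values, and on each resulting subinterval fix the sign of the envelope comparison by evaluating at one sample point. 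Verifying that these resultants carry no unexpected real root in $(r_1,1)$, and that the sample-point signs come out as required, is the computational heart of the proof and, together with the case $S\in(0,r_1]$ already settled in Lemma~\ref{not356r1} and $S\in[p_1,4)$ in Lemma~\ref{notp14}, completes the nonrealizability of Cases~3, 5 and~6.
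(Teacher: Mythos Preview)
Your split at $S=1$ and the reliance on the quantity $\mu$ from Lemma~\ref{notp14} hide a genuine gap. The apex $B=\ell_3\cap\ell_4$ does \emph{not} lie in $\ell_1^+$ for every $S\in(1,p_1)$: the rational function $\mu$ you invoke is exactly the one displayed in the proof of Lemma~\ref{notp14}, and its numerator carries the linear factor $(S-1.2\ldots)$. Hence $\mu<0$ for $S\in(1,1.2\ldots)$, so on that subinterval the triangle $T$ protrudes into $\ell_1^-$ and the four half-planes $M>0$, $w_1>0$, $w_3>0$, $w_4<0$ \emph{do} have a common point. Your argument for the right half of the interval therefore collapses precisely on $(1,1.2\ldots)$, and since you declared $w_1$ ``useless'' to the left of $S=1$, a neighbourhood of $S=1.2\ldots$ is not covered at all.

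The paper avoids this by splitting not at $S=1$ but at two computed values $y_0\approx0.861$ (the real root of $2S^3-17S^2+48S-30$) and $y_1\approx1.472$, and by bringing $w_6$ into play on the middle piece rather than only on the left. Concretely: on $(r_1,y_0)$ the sector $\{w_3>0\}\cap\{w_6<0\}$ already lies in $\{N<0\}$ (its vertex $\Pi=\ell_3\cap\ell_6$ has negative $N$-coordinate there), so those two conditions alone give the contradiction; on $(y_0,y_1)$ one checks that $\Pi$ lies above $\ell_4$ and that $s(\ell_4)<0<s(\ell_3)<s(\ell_6)$, whence $\{w_3>0,w_6<0\}$ misses $\ell_4^-$; and only on $[y_1,p_1)$ does one use the trio $(w_1,w_3,w_4)$ via $\Gamma=\ell_1\cap\ell_3$, where the troublesome factor $(S-1.2\ldots)$ is safely positive. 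In particular the parabolic constraint $N\ge M^2/4$ is never needed in this lemma --- the whole argument is carried by half-planes --- so your envisaged envelope/resultant computation on $(r_1,1]$ is an unnecessary detour. The fix you need is to replace the split point $S=1$ by $y_1$ and to use $(w_3,w_4,w_6)$ rather than $(w_1,w_3,w_4)$ on $(r_1,y_1)$.
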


\begin{proof}
Consider the conditions $w_3>0$ and $w_6<0$. They read 

$$(-R(S))N+Q(S)M+S-S^2>0\hspace{6mm}{\rm and}\hspace{6mm}
N+(S-4)M+P(S)<0~.$$
Consider the point $\Pi :=\ell _3\cap \ell _6$. Its coordinates equal

$$(-(S^4-22S^3+120S^2-204S+96)/2Y(S),-3(S^4-16S^3+54S^2-64S+16)/4Y(S))~,$$
where $Y(S):=2S^3-17S^2+48S-30$ has a single real root 
$y_0:=0.8609094817\ldots$. For $S\in (r_1,y_0)$ (resp. for  $S\in (y_0,p_1)$) 
one has $s(\ell _3)>s(\ell _6)$ (resp. $s(\ell _3)<s(\ell _6)$). 
%the slope of 
%$\ell _3$ is larger than the one of $\ell _6$, for $S\in (y_0,p_1)$ the 
%situation is the inverse. 
This follows from 

$$Q(S)/R(S)-(4-S)=(S^2-7.6\ldots S+17.4\ldots )(S-y_0)/R(S)$$
with $R(S)<0$. The second coordinate of $\Pi$ equals

$$-3(S-0.3\ldots )(S-11.9\ldots )(S^2-3.7\ldots S+4.0\ldots )/4Y(S)~.$$
Hence it changes sign from $-$ to $+$ when $S$ passes from $y_0^-$ to $y_0^+$. 
For $S\in (r_1,y_0)$ one has 
$\{ w_3>0\} \cap \{ w_6<0\} =\ell _3^+\cap \ell _6^-$. For $S=y_0$ the 
lines $\ell _3$ and $\ell _6$ are parallel, $\ell _3$ is 
above $\ell _6$ and $\{ w_3>0\} \cap \{ w_6<0\} =\emptyset$. 
Thus for $S\in (r_1,y_0)$ the sector $\ell _3\cap \ell _6$ belongs to 
the domain $N<0$ and if some of Cases 3, 5 or 6 is realizable, it can be 
realizable only for $S\in (y_0,p_1)$. 

For $S=y_0^+$ the intersection $\{ w_3>0\} \cap \{ w_6<0\}$ is a sector whose 
vertex has both coordinates positive because 
the first coordinate of $\Pi$ equals

$$-(S-0.7\ldots )(S-1.8\ldots )(S-4.6\ldots )(S-14.7\ldots )/2Y(S)>0~.$$
The point $\Pi$ lies above the line $\ell _4:P(S)N-R(S)M+Q(S)=0$ for 
$S\in (y_0,y_1)$, where $y_1:=1.471576286\ldots$. 
Indeed, substituting the coordinates of $\Pi$ for $(M,N)$ in the left-hand 
side of the equation of $\ell _4$ yields

$$ \frac{5(S^2-5.2\ldots S+20.3\ldots )(S^2-1.2\ldots S+1.2\ldots )
(S-7.9\ldots )(S-y_1)}{32(S^2-7.6\ldots S+17.4\ldots )(S-y_0)}>0~.$$
Moreover, $s(\ell _4)<0<s(\ell _3)<s(\ell _6)$. 
%is smaller than the one of $\ell _6$ (both 
%being positive) and the slope of $s(\ell _4$ is negative. 
Hence for 
$S\in (y_0,y_1)$ the three conditions $w_3>0$, $w_4<0$ and $w_6<0$ cannot hold 
true simultaneously. 

In order to prove the lemma for $S\in [y_1,p_1)$ we consider the conditions 

$$w_1>0~~{\rm ,~~i.e.}~~4(S-1)N-MS<0~~~{\rm and}~~~w_3>0~~{\rm ,~~i.e.}
~~-R(S)N+Q(S)M+S-S^2>0~.$$
The point $\Gamma :=\ell _1\cap \ell _3$ has coordinates 
$(M_{\Gamma},N_{\Gamma})$ which equal 
$$(4(S-1)^2S/(5S^3-16S^2+16S-4)~,
~S^2(S-1)/(5S^3-16S^2+16S-4))~.$$
Both coordinates are positive for $S\in [y_1,p_1)$ (the only real zero of 
the denominator equals $0.3\ldots$). The point $\Gamma$ lies above the 
straight line $\ell _4$. Indeed, substituting $(M_{\Gamma},N_{\Gamma})$ for $(M,N)$ 
in the left-hand side of the equation of $\ell_4:P(S)N-R(S)M+Q(S)=0$ with 
$P(S)>0$ yields 

$$\frac{3(S^2-2.5\ldots S+3.8\ldots )(S^2-0.5\ldots S+0.2\ldots )
(S-1.2\ldots )}{4(S^2-2.8\ldots S+2.1\ldots )(S-0.3\ldots )}>0~.$$
One has $s(\ell _4)<0<s(\ell _3)<s(\ell _1)$; 
%The slope of $\ell _4$ is negative, the ones of $\ell _1$ and $\ell _3$ are 
%positive and the one of $\ell _1$ is the larger of the two; 
the last inequality  follows from 

$$\frac{S}{4(S-1)}-\frac{Q(S)}{R(S)}=
-\frac{5(S^2-2.8\ldots S+2.1\ldots )(S-0.3\ldots )}
{4(S-5.2\ldots )(S-1)(S-0.7\ldots )}>0~.$$
Hence for 
$S\in [y_1,p_1)$ the sector $\{ w_1>0\} \cap \{ w_3>0\}$ does not intersect  
the half-plane $\{ w_4<0\} =\ell _4^-$, i.e. the three 
conditions $w_1>0$, $w_3>0$ and $w_4<0$ do not hold simultaneously. 
\end{proof}

%SSSSSSSSSSSSSSSSSSSSSSSSSSSSSSSSSSS

\section{Appendix. Proofs of 
Lemmas~\protect\ref{basiclemma} 
and \protect\ref{nextlemma}} 

\begin{proof}[Proof of Lemma~\protect\ref{basiclemma}]
Denote by $0<x_1<x_2<x_3<x_4$ the real roots of $P$. 
We are looking first for a polynomial 
$U^0(x)$ of the form $(P(x)+ax^8-bx^k+c)/(1+a)$ 
having a quadruple root $x_0>0$, 
where $k=1$ in Cases 3, 5 and 6, $k=3$ in Case~2, $k=5$ in Case~4, and 
$a>0$, $b>0$, $c>0$. The signs of $a$, $b$ and $c$ imply that 
$U^0$ defines the same sign 
pattern as $P$. The polynomial $U$ is obtained from $U^0$ by suitable rescaling 
and multiplication by a positive constant 
which does not change the sign pattern. 

For $x=x_0$ the polynomial $U^0$ satisfies the conditions 
$(U^0)'=(U^0)''=(U^0)'''=0$ 
which read:

\begin{equation}\label{EQQ}
\begin{array}{lll} 
~~k=1&&\\ 
P'(x)+8ax^7-b=0&P''+56ax^6=0&P'''+336ax^5=0\\ 
~~k=3&&\\ 
P'(x)+8ax^7-3bx^2=0&P''+56ax^6-6bx=0&P'''+336ax^5-6b=0\\ 
~~k=5&&\\ 
P'(x)+8ax^7-5bx^4=0&P''+56ax^6-20bx^3=0&P'''+336ax^5-60bx^2=0
\end{array}
\end{equation}
Consider first Cases 5 and 6, hence $k=1$. One eliminates $a$ 
from the last two equations which gives
$xP'''(x)=6P''(x)$. The polynomial $P'$ has exactly three positive roots 
$\mu _1<\mu _2<\mu _3$, $\mu _j\in (x_j,x_{j+1})$. 
Indeed, by Rolle's theorem it has at least three and by  
Descartes' rule of signs it has at most three of them. So for $x>x_4$ 
(resp. $x>\mu _3$) the polynomial $P$ (resp. $P'$) is positive. 

The polynomial $P''$ has at least two real roots 
$\xi _1<\xi _2$, $\xi _j\in (\mu _j,\mu _{j+1})$ (again by Rolle's theorem). 
By Descartes' rule of signs the polynomial $P''$ 
has at most three positive roots. The sign of the coefficient of $x^2$ in $P$ 
is negative, therefore 
$P''$ has exactly three positive roots. The third of them $\xi _3$  
is in $(0,\xi _1)$. Indeed, to the right of $\xi _2$ the number of 
positive roots of $P''$ must be even because for $x>0$ sufficiently large 
$P$ is convex. So $0<\xi _3<\xi _1<\xi _2$.   
 
The polynomial $P'''$ has real roots 
$\zeta _1 \in (\xi _3,\xi _1)$ and $\zeta _2\in (\xi _1,\xi _2)$. By  
Descartes' rule of signs it has at most three positive roots in Case~6 
and at most two in Case~5. In Case~6, as $P'''$ must have an even number of 
roots to the right of $\xi _2$ ($P'$ is convex for $x>0$ sufficiently large), 
the three positive roots $\zeta _3<\zeta _1<\zeta _2$ of $P'''$ belong 
respectively to the intervals 
$(0,\xi _3)$, $(\xi _3,\xi _1)$ and $(\xi _1,\xi _2)$. 

Hence the signs of $P'''(\xi _1)$ and $P'''(\xi _2)$ are opposite and 
$xP'''-6P''$ changes sign at some 
point $x_0\in (\xi _1,\xi _2)$. 
%In fact, $x_0\in (\xi _1,\zeta _2)$. Indeed, 
%one has $P'''(\zeta _2)=0$, $P''(\zeta _2)<0$, so 
%$(xP'''-6P'')|_{x=\zeta _2}>0$, 
%and $P'''(\xi _1)<0$, $P''(\xi _1)=0$, so $(xP'''-6P'')|_{x=\xi _1}<0$.    

In Case~3 one has again $k=1$. The sign patterns $\sigma _3$ and 
$\sigma _5$ differ only in their third position. The proof resembles the one 
in Cases~5 and 6 yet Descartes' rule of signs allows more positive roots 
for $P'$, $P''$ and $P'''$.  

Denote by $p(P')$ the number of positive roots of $P'$. 
Combining Rolle's theorem and Descartes' rule of signs 
one understands that it is possible to encounter 
only one of the following triples $(p(P'),p(P''),p(P'''))$:

$$i)~(3,5,4)\hspace{1cm}ii)~(3,3,4)\hspace{1cm}iii)~(3,3,2)\hspace{1cm}
iv)~(5,5,4)~.$$
In case {\em iii)} the proof is carried out in exactly the same way as for 
Case~5. In the other cases one performs analogous reasoning with only 
difference the two more positive roots of $P''$ and $P'''$ in case {\em i)}, 
of $P'''$ in case {\em ii)} or of $P'$, $P''$ and $P'''$ in case {\em iv)}. 
For parity reasons the two more roots of the corresponding derivative $P^{(j)}$ 
(compared to their number in the proof of Case~5) must belong to one and the 
same interval of $[0,\infty )$ defined by $0$, $\infty$ and the positive 
roots of $P^{(j-1)}$. One proves as for Case~5 that the signs of $P'''$ 
at two consecutive roots of $P''$ are opposite, hence $xP'''-6P''$ 
changes sign at some 
point $x_0$ from the interval between these two roots.   

Consider Case~2, hence $k=3$. Eliminating $b$ from equations (\ref{EQQ}) yields:

$$2P'-xP''=40ax^7~~{\rm and}~~P''-xP'''=280ax^6~.$$
Eliminating $a$ from the last two equations gives the equation

$$14P'-8xP''+x^2P'''=(14P'-2xP'')-(x/2)(14P'-2xP'')'=0~.$$
The polynomial $P'$ has at most four 
positive roots (by Descartes' rule of signs), 
and at least three of them (denoted by $\mu _j$) belong to   
the intervals $(x_j,x_{j+1})$, 
$j=1$, $2$ and $3$, hence the fourth one $\mu _0$ is in $(0,x_1)$ 
(because $P'(0)>0$). 
The polynomial 
$P''$ has positive roots $\xi _{\nu}\in (\mu _{\nu},\mu _{\nu +1})$, 
$\nu =1$, $2$, and $\xi _0\in (\mu _0,\mu _1)$. Hence the 
polynomial $S:=14P'-2xP''$ has different signs at $\mu _{\nu}$ and 
$\mu _{\nu +1}$ for $\nu =1$ and $2$, hence it has roots 
$\delta _{\nu}\in (\mu _{\nu},\mu _{\nu +1})$, its derivative has opposite 
signs at $\delta _1$ and $\delta _2$, so $S-(x/2)S':=14P'-8xP''+x^2P'''$ 
has a real root $x_0\in (\mu _1,\mu _3)$.  

Consider Case~4, hence $k=5$.  
One first eliminates $b$ (see equations (\ref{EQQ})):

$$4P'-xP''=24ax^7~~\, \, {\rm and}~~\, \, 3P''-xP'''=168ax^6~.$$
Eliminating after this $a$ results in 

$$28P'-10xP''+x^2P'''=(28P'-4xP'')-(x/4)(28P'-4xP'')'=0~.$$
Similarly to the proof in Case~2 one shows that the polynomial 
$28P'-10xP''+x^2P'''$ 
has a positive root $x_0$.

After the number $x_0$ is found, one finds first $a$ and then $b$ 
from system (\ref{EQQ}). Now we have to justify the positive signs of $a$ and 
$b$ (and after this the one of $c$ as well). To this end 
we set $a=ta_*$, $b=tb_*$, where $t>0$, and 
we consider the family 
of polynomials $R_t(x):=P(x)+t\psi _k(x)$ with 
$\psi _k:=a_*x^8-b_*x^k$, $k=1$, $3$ or $5$. We suppose that for some $t>0$ 
the polynomial $R_t$ has a triple critical point at $x_0$. 
Hence for a suitably chosen 
$c$ the polynomial $R_t+c$ has a quadruple root at $x_0$. 
 
Consider the function $\psi _k$ for $x>0$. For 
$a_*\geq 0$, $b_*\leq 0$ and $a_*-b_*>0$ it is increasing and convex, 
for $a_*\leq 0$, $b_*\geq 0$ and $a_*-b_*<0$ it is  
decreasing and concave (for $a_*=0$ and $k=1$ it is linear, i.e. convex and 
concave at the same time). For $a_*>0$ and $b_*>0$ (resp. for $a_*<0$ and 
$b_*<0$) 
it has a minimum (resp. a maximum) at 
$\lambda _k:=(kb_*/8a_*)^{1/(8-k)}$ with $\psi _k(x)<0$ for $x\in (0,\lambda _k]$ 
(resp. 
with $\psi _k(x)>0$ for $x\in (0,\lambda _k]$). 

Consider the family of polynomials $R_t$, where $t$ is 
supposed to belong to an interval $[0,\alpha )$ such that the sign pattern 
defined by the coefficients of $R_t$ is the one of $P$. We keep the 
same notation for the positive roots of $R_t$ and its derivatives 
as the one for $P$. Then:

A) If $\psi _k$ is decreasing on $[\mu _2,\mu _3]$, then as $t$ increases, 
$\mu _2$ moves to the left and $\mu _3$ to the right;

B)  If $\psi _k$ is increasing on $[\mu _1,\mu _2]$, then as $t$ increases, 
$\mu _1$ moves to the left and $\mu _2$ to the right.

In both cases A) and B) it is impossible to have the three positive 
roots of $R_t'$ coalescing into a single critical point of $R_t$. 
If $a_*\geq 0$, $b_*\leq 0$ and $a_*-b_*>0$, then case B) takes place. 
If $a_*\leq 0$, $b_*\geq 0$ and $a_*-b_*<0$, then case A) takes place. 
If $a_*<0$ and $b_*<0$, then at least one of cases A) or B) takes place. 
Hence only for $a_*>0$ and $b_*>0$ can one have a critical point of $R_t$ 
of multiplicity $3$. This implies that $a>0$ and $b>0$. 
Besides, $\lambda _k\in (\mu _1,\mu _3)$. Hence 
$R_t(\mu _1)<0$ (because $P(\mu _1)<0$ and $\psi _k(\mu _1)<0$)  
and to have $U^0(x_0)=0$ one has to choose $c>0$. 
\end{proof} 

\begin{proof}[Proof of Lemma~\protect\ref{nextlemma}]
Prove part (1). Consider the one-parameter family 
of polynomials $U_t:=U-t(x-1)^4$, $t\geq 0$. The first four 
coefficients do not depend on $t$ (they are the same as the ones of $U$). 
The signs of the five coefficients of $-(x-1)^4$ are $(-,+,-,+,-)$. 
Hence the first $8$ components of the sign pattern of $U_t$ 
do not depend on $t$ 
and in the family $U_t$ for some $t>0$, due to the decreasing of the value 
of $U_t$ as $t$ increases, one of the two things 
takes place first: 

a) one has $U_t(0)=0$ or 

b) $U_t$ has one or two negative roots, 
each of them of even multiplicity. 

One can notice that 
the family $U_t$ contains no polynomial 
with six positive roots (counted with multiplicity) because there are 
four or five sign changes in the sign pattern of $U_t$ 
(the sign pattern of $U_t$ is obtained 
from $\sigma _2^r$, $\sigma _4$ or $\sigma _6^r$ by replacing the last 
component by $+$, $0$ or $-$).  

If a) takes place for $t_0>0$, then as $U_t'(0)>0$, the root of $U_t$ at $0$ is 
simple and $U_t$ has one or several negative roots 
whose total multiplicity is odd. 
Hence 
for some $t_1\in (0,t_0)$, b) has taken place. Therefore in the family 
$U_t$ there exists (for some $t>0$) a polynomial of the form (\ref{polyW})
which realizes the pattern $\sigma _2^r$, $\sigma _4$ or $\sigma _6^r$. 

Prove part (2) of the lemma. 
Suppose that the polynomial $U$ realizes the sign pattern 
$\sigma _3^r=(+,-,-,+,-,+,-,+,+)$. Consider the family 
$U^*_t=U+tx(x-1)^4$, $t>0$. The signs of the coefficients of 
$x(x-1)^4$ are $(0,0,0,+,-,+,-,+,0)$, so the sign pattern of $U^*_t$ is 
$\sigma _3^r$ for any $t>0$. The value of $U^*_t$ increases 
(linearly with $t$) for each 
$x>0$, $x\neq 1$ fixed, and decreases for each $x<0$ fixed. 
Hence for some $t>0$ 
the polynomial $U^*_t$ has one or two negative roots each 
of even multiplicity. For this value of $t$ the polynomial $U^*_t$ has the 
form (\ref{polyW}).

The proof of part (3) resembles the one of part (2). 
Suppose that the polynomial $U$ realizes the sign pattern 
$\sigma _6^r=(+,-,-,+,-,+,+,+,+)$. The difference between $\sigma _6^r$ and 
$\sigma _3^r$ is in the sign of the coefficient of $x^2$. Hence in the family 
$U^*_t$ there is a polynomial with a quadruple root at $1$, with one or two 
negative roots of even multiplicity and with coefficients defining 
either one of the sign patterns $\sigma _3^r$, $\sigma _6^r$ or the 
sign pattern $\sigma ^*$ (the sign of the coefficient of $x^2$ in $U^*_t$ 
might change for some value of $t$). In all three cases this is a polynomial 
of the form (\ref{polyW}).
\end{proof}

\end{document}